\documentclass[10pt]{scrartcl}
\title{Convergence Rates for Inverse Problems with Impulsive Noise}
\author{Thorsten Hohage\footnotemark[2]\ \footnotemark[4] and Frank Werner\footnotemark[3]\ \footnotemark[4]}

\usepackage[english] {babel}
\usepackage[utf8]{inputenc}
\usepackage{amssymb}
\usepackage{amsxtra}
\usepackage{amsthm}
\usepackage{a4wide}
\usepackage{mathrsfs}
\usepackage{exscale}
\usepackage{mathrsfs}
\usepackage{subfigure}
\usepackage{cite}
\usepackage{booktabs}
\usepackage{pgfplots}
\usepackage{tikz}
\pgfplotsset{compat=newest}
\pgfplotsset{plot coordinates/math parser=false}

\newlength\fheight \newlength\fwidth
\DeclareMathOperator*{\argmin}{argmin}
\DeclareMathOperator*{\argmax}{argmax}
\DeclareMathOperator*{\esssup}{ess\,sup}
\newcommand{\X}{\mathcal X}
\newcommand{\Y}{\mathcal Y}
\newcommand{\sol}{f}
\newcommand{\udag}{\sol^\dagger}
\newcommand{\data}{g}
\newcommand{\gdag}{\data^\dagger}
\newcommand{\gobs}{\data^{\rm obs}}
\newcommand{\ualdel}{\widehat{\sol}_\alpha}
\newcommand{\paldel}{\widehat{p}_\alpha}
\newcommand{\manifold}{\mathbb{M}}
\newcommand{\mc}{\mathbb{P}}
\newcommand{\mi}{\manifold\setminus\mc}
\newcommand{\M}{\mathfrak{M}}
\newcommand{\R}{\mathcal{R}}

\newcommand{\Rset}{\mathbb{R}}
\newcommand{\Borel}{\mathfrak{B}}
\newcommand{\dom}{\mathrm{dom}}

\newcommand{\breg}[1]{\mathcal D\left(#1,\udag\right)}

\newcommand{\Cerr}{C_{\rm err}}
\newcommand{\Cbreg}{C_{\rm bd}}
\newcommand{\err}{\mathbf{err}}
\newcommand{\errbound}{\overline{\err}}
\newcommand{\braces}[1]{\left\{ #1\right\}}
\newcommand{\paren}[1]{\left( #1\right)}
\newcommand{\norm}[2]{\left\Vert#1\right\Vert_{#2}}

\newcommand{\abs}[1]{\left|#1\right|}
\newcommand{\meas}[1]{\left|#1\right|}
\newcommand{\cone}{\mathcal{C}}
\begin{document}

\newtheorem{thm}{Theorem}[section]
\newtheorem{prop}[thm]{Proposition}
\newtheorem{lem}[thm]{Lemma}
\newtheorem{cor}[thm]{Corollary}
\newtheorem{rem}[thm]{Remark}
\newtheorem{ex}[thm]{Example}
\newtheorem{ass}{Assumption}
\setlength{\parindent}{0cm}
\maketitle

\renewcommand{\thefootnote}{\fnsymbol{footnote}}

\footnotetext[2]{hohage@math.uni-goettingen.de, +49 (0)551 39 4509}
\footnotetext[3]{f.werner@math.uni-goettingen.de, +49 (0)551 39 12466}
\footnotetext[4]{Institute for Numerical and Applied Mathematics, University of G\"ottingen, Germany}

\renewcommand{\thefootnote}{\arabic{footnote}}
\begin{abstract}
We study inverse problems $F\left(\sol\right) =\data$ 
with perturbed right hand side $\gobs$ corrupted by so-called impulsive 
noise, i.e.\ noise which is concentrated on a small subset of the domain of 
definition of $\data$. It is well known that Tikhonov-type 
regularization with an $L^1$ data fidelity term yields significantly more 
accurate results than Tikhonov regularization with classical $L^2$ data 
fidelity terms for this type of noise.  
The purpose of this paper is to provide a convergence analysis explaining this 
remarkable difference in accuracy. Our error estimates significantly 
improve previous error estimates for Tikhonov regularization with 
$L^1$-fidelity term in the case of impulsive noise. 
We present numerical results which are in good agreement with the predictions 
of our analysis. 
\end{abstract}
\pagestyle{myheadings}
\thispagestyle{plain}
\markboth{T.~Hohage and F.~Werner}{Convergence Rates for Inverse Problems with Impulsive Noise}

\section{Introduction}
A noise vector or noise function $\xi:\manifold\to\Rset$
is called impulsive if $|\xi|$ is large on a small part of its domain 
of definition $\manifold$  and small or zero elsewhere. 
In the latter case the noise vector will be sparse in a discrete setting. 
Impulsive noise occurs in many applications, e.g.\ 
switching noise in powerline communication systems, 
physical measurements with 
malfunctioning receivers or digital image acquisition with faulty memory 
locations. 

In this paper we study such noise models in the context of inverse 
problems described by a forward operator 
$F:D\left(F\right) \subset \X \to \Y$ between Banach spaces $\X$ and 
$\Y$. Most of this paper deals with the case that 
$\Y = L^1(\manifold)$ for some 
open subset $\manifold\subset \Rset^d$. 
$\udag\in D\left(F\right)$ will denote the exact solution, 
and observed data are described by
\begin{equation}\label{eq:opeq}
\gobs = F\left(\udag\right) + \xi .
\end{equation}
A standard method to construct a stable approximation to $\udag$ in 
this setting is to compute a minimizer of a generalized Tikhonov 
functional
\begin{equation}\label{eq:tik_gen}
\ualdel \in\argmin\limits_{\sol \in D\left(F\right)} \left[\frac{1}{\alpha r}\norm{F\left(\sol\right) - \gobs}{L^r(\manifold)}^r
+ \R\left(\sol\right)\right].
\end{equation}
Here $\R:\X\to (-\infty,\infty]$ is a  convex, lower-semicontinuous and proper
penalty functional (e.g.\ $\R(\sol) =\frac{1}{q}\norm{\sol-\sol_0}{\X}^q$  
with $q\geq 1$ and $\sol_0\in \X$), and $\alpha>0$ is a regularization 
parameter. An interesting special case corresponding to denoising problems 
is that $F$ is an embedding operator of a space $\X$ of higher regularity 
into $L^r(\manifold)$. 
It has been observed by many authors that the choice $r=1$ yields 
much better results than $r=2$ in the case of impulsive noise, and 
several algorithms have been proposed to minimize the Tikhonov 
functional for $r=1$, see e.g.\   
\cite{afhl12,cjk10b,cj12,j11,js12,kkm05,lsds11,n02,n04,ygo07,yzy09}.

We will develop a convergence analysis 
explaining this remarkable difference between Tikhonov regularization
with $r=1$ and $r=2$ for impulsive noise. 
Over the last years several general convergence results for generalized 
Tikhonov regularization as $\norm{\xi}{L^r\left(\manifold\right)}\to0$ have been derived covering 
\eqref{eq:tik_gen} both with $r=1$ and $r=2$
(see \cite{bo04,r05,fh10,g10b,cj12,cjk10b}). 
It follows from our analysis (see eq.~\eqref{eq:improvement} and 
Table~\ref{tab:rates}) that these error bounds tend to be 
highly suboptimal for impulsive noise, even though they are likely to be 
order optimal in a supremum over all $\|\xi\|_{L^1\left(\manifold\right)}\leq \delta$. 

We describe the ``strength'' of an impulsive noise vector $\xi$ by 
two nonnegative parameters $\varepsilon$ and $\eta$, and our main 
result will be an error estimate in terms of these parameters. 
We assume that 
\begin{equation}\label{eq:noise_model_1}
\exists~\mc\in\Borel(\manifold): \qquad 
\norm{\xi}{L^1\left(\mi\right)} \leq \varepsilon, \qquad \left|\mc\right| \leq \eta 
\end{equation}
where $\Borel(\manifold)$ denotes the Borel $\sigma$-algebra of $\manifold$. 
This means that the data may be arbitrarily strongly corrupted on a small part $\mc\subset\manifold$ whereas 
the $L^1$--error is small in the remaining part of $\manifold$. 
\eqref{eq:noise_model_1} is a continuous, deterministic noise model. 
Under commonly used discrete, stochastic impulsive noise models 
such as random-valued impulsive noise (RVIN) and in particular 
salt-and-pepper noise 
(see e.g.\ \cite{chn04,cj12}) it is satisfied with discrete 
$\manifold$, $\varepsilon=0$, and some finite $\eta$ with  high probability. 
Note that we do not impose any bound on $|\xi|$ on the set $\mc$. Therefore, 
\eqref{eq:noise_model_1} with positive $\varepsilon$
is also satisfied with high probability 
for more general stochastic noise models involving heavy tails. 

The structure of this paper is as follows: In the following 
section \ref{sec:tik} we review the convergence analysis of  
generalized Tikhonov regularization \eqref{eq:tik_gen} based on a variational formulation of both the source condition and the noise 
level along the lines of \cite{g10b,wh12}. Section \ref{sec:conv} 
contains an error bound in terms of the parameters 
$\varepsilon$ and $\eta$ in \eqref{eq:noise_model_1}, the smoothing 
properties of the operator, and the smoothness of the exact solution 
in terms of a variational source condition. 

In the following section 
we derive rates of convergence by minimizing the right hand side 
of the error bound of the previous section over its parameters. 
For this end we study properties of the function $\varepsilon_{\xi}(\eta)$,  
the minimal value of $\varepsilon$ in \eqref{eq:noise_model_1} for given 
$\xi$ and $\eta$. 
We end this paper by numerical studies demonstrating the sharpness 
of our error bounds in section \ref{sec:num} and some conclusions.

\section{Generalized Tikhonov regularization}\label{sec:tik}
In this section we will set the stage for the subsequent analysis by reviewing with small modifications some known results on generalized Tikhonov regularization in Banach spaces. 

\subsection{well-posedness of Tikhonov regularization}
We first formulate well-known sufficient 
conditions for well-posedness of Tikhonov regularization.
\begin{ass}\label{ass:wellposed}
Let $\X,\Y$ be Banach space, and let $\tau_{\X}$ and $\tau_{\Y}$ 
denote topologies on $\X$ and $\Y$ which are weaker than the norm 
topologies. Moreover, let $F:D(F)\subset\X\to\Y$ be an operator, let
$\R:\X\to (-\infty,\infty]$ a convex, lower semicontinuous functional with  
nonempty 
essential domain $\dom\R := \left\{\sol \in \X ~\big|~ \R\left(\sol\right) < \infty\right\}$ such that 
$\dom\R \subset D(F)$ 
and let $r\in [1,\infty)$. We assume that 
\begin{itemize}
\item all sub-level sets  
$\left\{\sol \in D\left(F\right) ~\big|~ \R\left(\sol\right) 
\leq R\right\}$ for $R\in\Rset$ are sequentially compact 
w.r.t.\ $\tau_{\X}$. 
\item $\|\cdot\|_{\Y}$ is sequentially lower-semicontinuous 
w.r.t.\ $\tau_{\Y}$.
\item $F:\dom\R \to \Y$ is sequentially continuous w.r.t.\
$\tau_{\X}$ and $\tau_{\Y}$. 
\end{itemize}
\end{ass}

Under these conditions the existence of a minimizer 
\begin{equation}\label{eq:tik_gen2}
\ualdel \in\argmin\limits_{\sol \in \X} \left[\frac{1}{\alpha r}\norm{F\left(\sol\right) - \gobs}{\Y}^r
+ \R\left(\sol\right)\right]
\end{equation}
for all $\gobs\in\Y$ can be proven by standard arguments 
(see e.g.\ \cite[Thm.~3.2]{f12} or \cite[Thm.~3.22]{s08} for a proof under 
slightly different assumptions). If $\R$ is strictly convex and $F$ is linear, 
then $\ualdel$ is unique. 
Moreover, under Assumption \ref{ass:wellposed} the minimizers 
$\ualdel$ of \eqref{eq:tik_gen2} are stable w.r.t.\ $\gobs$ 
(see e.g.\ \cite[Thm.~3.3]{f12} or \cite[Thm.~3.23]{s08}).

If $\R\left(\sol\right) =\frac1q\norm{\sol-\sol_0}{\X}^q$, 
$q\geq 1$ for a reflexive Banach space $\X$ and $\sol_0 \in \X$, 
then the assumption on the sublevel sets  
holds true for the weak topology $\tau_{\X}$ on $\X$ and it is natural to consider also the weak topology $\tau_{\Y}$ on $\Y$. Note that
weak sequential continuity of $F$ is a mild assumption which 
holds true in particular for all bounded linear operators. 

\subsection{the data error functional $\err$}
It is instructive to study the case of the  ``most extremely impulsive noise'' where $\xi$ is a sum of $\delta$-peaks:

\begin{ex}\label{ex:delta_peaks}
We choose $\Y$ as the Banach space $\M \left(\manifold\right)$ of all signed finite Borel measures equipped with the total variation norm
\(
\norm{\mu}{\M \left(\manifold\right)} := \left|\mu\right| \left(\manifold\right)
\). 
Recall that $L^1(\manifold)$ can be considered as a closed subspace of $\M(\manifold)$ 
by identifying  a function $g \in L^1\left(\manifold\right)$ with the measure 
$\mu_g\left(A\right) := \int_A g\,\mathrm d x$ and that $\norm{\data}{L^1\left(\manifold\right)} = \|\mu_g\|_{\M}$. 
Let 
\begin{equation}\label{eq:noise_model_2}
\xi = \sum\limits_{j=1}^N c_j \delta_{x_j}
\end{equation}
with $N \in \mathbb N$, $c_j \in \mathbb R$ and $x_j \in \manifold$ for $1 \leq j \leq N$.
Here the classical noise level 
\begin{equation}\label{eq:defi_delta}
\norm{\xi}{\M \left(\manifold\right)} = \sum\limits_{j=1}^N \left|c_j\right|
\end{equation}
might be large. However, for $g \in L^1 \left(\manifold\right)$ we have
\begin{equation}\label{eq:noise_model_2_aux}
\norm{\data-\gobs}{\M \left(\manifold\right)} = \norm{\data-\gdag}{L^1\left(\manifold\right)} + \sum\limits_{j=1}^N \left|c_j\right| = \norm{\data-\gdag}{L^1\left(\manifold\right)} + \norm{\xi}{\M \left(\manifold\right)}.
\end{equation}
This means that the noise $\xi$ influences the data fidelity functional $\data\mapsto 
\norm{\data-\gobs}{\M\left(\manifold\right)}$ 
only in the form of an additive constant $\norm{\xi}{\M \left(\manifold\right)}$ which 
has no influence on the minimizer of the Tikhonov functional \eqref{eq:tik_gen2}. 

Therefore, we expect to be able
to recover the unknown solution exactly in the limit $\alpha\searrow 0$ even though
the classical noise level may be large. Remarkably, one even obtains exact recovery 
with noisy data for finite $\alpha$ if $\udag$ satisfies a specific source condition (see 
Remark \ref{rem:exact_penalization}). 
\end{ex}

This example shows that the norm of $\xi$ is not always a good measure of  
its influence on the reconstruction error. We have to study the influence of 
$\xi$  on the empirical data fidelity functional 
$\data\mapsto \norm{\data-\gobs}{\Y}^r$ more precisely. As in \cite{hw13} 
we will describe the difference of the empirical data fidelity functional 
and the ideal data fidelity functional $\data\mapsto \norm{\data-\gdag}{\Y}^r$
not only by a number, but by a functional $\err$. Obviously, additive
constants do not matter, so we subtract $\|\gdag-\gobs\|^r=\|\xi\|^r$. 
Moreover, it will be important to allow different multiplicative constants. 
This motivates the following assumption, which has been used in 
\cite[Ass.~1]{wh12} with $\err=const$:
\begin{ass}\label{ass:err}
Let $\udag\in\dom\R$ denote the exact solution, let 
$\gdag := F\left(\udag\right)$, and let $\gobs\in\Y$ be the 
observed data. We assume that there exist $\Cerr > 0$ and a 
noise level function 
$\err : F\left(\dom\R\right) \to \left[0,\infty\right]$ such 
that
\begin{equation}\label{eq:err}
\norm{\data-\gobs}{\Y}^r -  \norm{\xi}{\Y}^r \geq \frac{1}{\Cerr}\norm{\data-\gdag}{\Y}^r -\err\left(\data\right),
\qquad \data\in F(\dom\R). 
\end{equation}
\end{ass}

In the following we will bound the reconstruction error in terms of $\err(F(\ualdel))$.
\begin{rem}\label{rem:ass_err} 
\begin{enumerate}
\item
It follows from the triangle inequality that Assumption \ref{ass:err} is always fulfilled with 
\begin{equation}\label{eq:standard_err_bound}
\Cerr = 2^{r-1}\qquad \mbox{and}\qquad \err \equiv 2\norm{\xi}{\Y}^r.
\end{equation}
\item In Example \ref{ex:delta_peaks} (see eq.~\eqref{eq:noise_model_2_aux}), 
Assumption \ref{ass:err} holds true with the optimal parameters 
\[
\Cerr = 1\qquad  \mbox{and} \qquad \err \equiv 0.
\] 
\end{enumerate}
\end{rem}

\subsection{error bounds}
The derivation of convergence rates for inverse problems always requires some a priori knowledge on the unknown solution $\udag$, as otherwise the rate of convergence may be arbitrarily slow (see \cite{ehn96}). 
Convergence rates are usually measured w.r.t.\ the Bregman distance
\[
\breg{\sol} := \R \left(\sol\right) - \R \left(\udag\right) - \left<\sol^*, \sol-\udag\right>
\]
where $\sol^* \in \partial \R \left(\udag\right)$ is a subgradient. 
Note that $\breg{\sol}$ depends on $\R$ and the choice of $\sol^*$ 
(unless $\partial \R \left(\udag\right)$ is a singleton), but we 
omit this dependence in our notation. 
The Bregman distance has first been used for the convergence analysis 
of generalized Tikhonov regularization by Eggermont \cite{e93} for 
maximum entropy regularization and for more general penalty functionals 
by Burger \& Osher \cite{bo04}. 
For $\R\left(\sol\right) = \frac12\norm{\sol-\sol_0}{\X}^2$ with a Hilbert norm $\norm{\cdot}{\X}$ one obtains $\breg{\sol}= \frac12\norm{\sol-\udag}{\X}^2$. In this sense, the Bregman distance is a natural generalization of the norm. 

Recently, in a number of papers \cite{bh10,hy10,fh11,g10b} rates of convergence of generalized Tikhonov regularization
have been analyzed using a variational formulation of the 
source condition: 
\begin{ass}\label{ass:vie}
Suppose the variational inequality 
\begin{equation}\label{eq:vie}
\beta \breg{\sol} \leq \R\left(\sol\right)-\R\left(\udag\right) + \varphi\left(\norm{F\left(\sol\right)-\gdag}{\Y}^r\right)\qquad \mbox{for all } \sol\in\dom\R
\end{equation}
holds true with some $\beta>0$ and a concave index function $\varphi : \left[0,\infty\right) \to \left[0,\infty\right)$ (i.e.\ $\varphi$ monotonically increasing, $\varphi \left(0\right) = 0$).\footnote{Note that the concavity of $\varphi$ implies that $-\varphi$ is convex and due to finiteness thus also continuous (see \cite[Cpt.~1, Cor.~2.3]{et76}).}
\end{ass}

In a Hilbert space setup with $\R\left(\sol\right) = \left\Vert \sol-\sol_0\right\Vert_{\X}^2$ and a bounded linear operator $F = T$ it has been shown that Assumption \ref{ass:vie} is in general weaker than spectral source conditions yielding the same convergence rates \cite{fhm11}, and also for general $\R$ and Fr\'echet-differentiable $F$ having a Lipschitz continuous derivative $F'$ w.r.t. the Bregman distance it is known that 
\eqref{eq:vie} with $\varphi \left(t\right) = ct^{1/r}$ is equivalent to the so-called {\it Benchmark source condition}
\begin{equation}\label{eq:sc_bench}
F'\left[\udag\right]^*\omega \in \partial \R \left(\udag\right)
\end{equation}
for some $\omega\in \Y^*$ (see e.g.\ \cite[Prop.~3.35 \& 3.38]{s08}). This also shows that \eqref{eq:vie} can in general be seen as a combination of source and nonlinearity condition.

As first noticed by Grasmair \cite{g10b}  the approximation error can be bounded in terms of 
the Fenchel conjugate  of $-\varphi$, which is defined by
\begin{equation}\label{eq:defi_Fenchel}
\left(-\varphi\right)^* \left(s\right) = \sup\limits_{\tau \geq 0}\left(s\tau + \varphi \left(\tau\right)\right),  \qquad s < 0,
\end{equation}
(see e.g.\ \cite[Cpt.~3]{et76} for more information about Fenchel duality). 
More precisely it will turn out that the approximation error is bounded by
the function $\psi : \left(0,\infty\right) \to \left[0,\infty\right]$, 
\begin{equation}\label{eq:defi_psi}
\psi\left(\alpha\right) := \left(-\varphi\right)^*\left(-\frac{1}{\alpha}\right), \qquad \alpha > 0.
\end{equation}
Now we are ready to prove the following convergence estimates which extend 
\cite[Thm.~3.3]{wh12} by error bounds in $\Y$. Such error bounds are interesting 
in particular for denoising problems (i.e.\ $F$ an embedding operator). Error bounds in $\Y$ under 
variational source conditions are also known (see  \cite{hm12}), but only under the classical noise 
level $\err \equiv 2 \norm{\xi}{\Y}^r$ (cf. Remark \ref{rem:ass_err}).
\begin{thm}\label{thm:conv}
Under Assumptions \ref{ass:wellposed}, 
\ref{ass:err} and \ref{ass:vie} the following holds true:
\begin{enumerate}
\item {\bf Bounds for the minimizers:} 
\begin{equation}\label{eq:min_bounds}
\R\left(\ualdel\right) \leq  \frac{\err\left(F\left(\ualdel\right)\right) }{r\alpha} + \R\left(\udag\right) 
\end{equation}
for all $\alpha > 0$ and all minimizers $\ualdel$ in 
\eqref{eq:tik_gen2}.
\item {\bf Error decomposition:} For all $\alpha > 0$ and $\lambda \in \left(0,1\right)$ we have 
\begin{subequations}\label{eqs:error_decomposition}
\begin{align}\label{eq:error_decomp_breg}
\beta\breg{\ualdel} &\leq \frac{\err \left(F\left(\ualdel\right)\right)}{r\alpha} +\psi\left(r\Cerr\alpha\right) \\[0.1cm]
\label{eq:error_decomp_residuals}
\norm{F\left(\ualdel\right)-\gdag}{\Y}^r &\leq \frac{\Cerr}{\lambda}\err \left(F\left(\ualdel\right)\right) + \frac{r\Cerr\alpha}{\lambda}\psi\left(\frac{r\Cerr\alpha}{1-\lambda}\right).
\end{align}
\end{subequations}
\item {\bf Convergence rates:} If $\Cerr$ is chosen such that $\errbound := \sup_{\sol \in \dom\R} \err \left(F\left(\sol\right)\right)$ is finite (see Remark \ref{rem:ass_err}), 
then the infimum of the right-hand side of \eqref{eq:error_decomp_breg} with $\err \left(F\left(\ualdel\right)\right)$ replaced by $\errbound$ is attained if and only if $\alpha$ is chosen a priori such that 
\begin{equation}\label{eq:choice_alpha}
\frac{-1}{r\Cerr\alpha} \in \partial\left(-\varphi\right)\left(r\Cerr\errbound\right).
\end{equation}
For $\alpha$ as in \eqref{eq:choice_alpha} we have
\begin{subequations}
\begin{equation}\label{eq:rates_breg}
\breg{\ualdel} = \mathcal O \left(\varphi \left(\errbound\right)\right) \qquad\text{as}\qquad \errbound \searrow 0.
\end{equation}
If moreover $\varphi \left(t\right) = c \cdot t^\kappa$ with $\kappa \in \left(0,1\right]$ and $c > 0$, then
\begin{equation}\label{eq:rates_residuals}
\norm{F\left(\ualdel\right)-\gdag}{\Y}^r = \mathcal O \left(\errbound\right)\qquad\text{as}\qquad \errbound \searrow 0.
\end{equation}
\end{subequations}
\end{enumerate}
\end{thm}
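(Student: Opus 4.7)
The proof combines the minimizing property of $\ualdel$, Assumption~\ref{ass:err} to pass from $\gobs$ to $\gdag$, and the variational source condition \eqref{eq:vie}; Part~3 is then a Fenchel--Young duality computation. Testing \eqref{eq:tik_gen2} at the competitor $\udag$ gives
\[
\frac{1}{r\alpha}\norm{F(\ualdel)-\gobs}{\Y}^r + \R(\ualdel) \leq \frac{1}{r\alpha}\norm{\xi}{\Y}^r + \R(\udag).
\]
Bounding the data term from below by \eqref{eq:err} and \emph{dropping} the nonnegative summand $\Cerr^{-1}\norm{F(\ualdel)-\gdag}{\Y}^r$ immediately yields \eqref{eq:min_bounds}. \emph{Retaining} it and writing $t:=\norm{F(\ualdel)-\gdag}{\Y}^r$ produces the master inequality
\[
\R(\ualdel) - \R(\udag) + \frac{t}{r\Cerr\alpha} \leq \frac{\err(F(\ualdel))}{r\alpha} \qquad (\star)
\]
on which all of Part~2 rests.

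\textbf{Error decompositions.} For \eqref{eq:error_decomp_breg} I add \eqref{eq:vie} with $\sol = \ualdel$ to $(\star)$ so that the $\R$-difference cancels and $\varphi(t) - t/(r\Cerr\alpha)$ appears on the right; passing to the supremum over $t\geq 0$ replaces this quantity by $\sup_{\tau\geq 0}\bigl[\varphi(\tau) - \tau/(r\Cerr\alpha)\bigr] = (-\varphi)^*\bigl(-1/(r\Cerr\alpha)\bigr) = \psi(r\Cerr\alpha)$. For \eqref{eq:error_decomp_residuals} I instead invoke \eqref{eq:vie} in the reverse form $\R(\udag) - \R(\ualdel) \leq \varphi(t)$ (via $\beta\breg{\ualdel}\geq 0$) inside $(\star)$ to get
\[
\frac{t}{r\Cerr\alpha} \leq \frac{\err(F(\ualdel))}{r\alpha} + \varphi(t),
\]
and then apply the Fenchel--Young inequality $\varphi(t) \leq \psi(\sigma) + t/\sigma$ (valid for every $\sigma>0$ by definition of the conjugate) with the carefully chosen parameter $\sigma = r\Cerr\alpha/(1-\lambda)$. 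This absorbs a $(1-\lambda)$-fraction of the $t/(r\Cerr\alpha)$-term into the left-hand side, leaving $\lambda t/(r\Cerr\alpha)$; multiplication by $r\Cerr\alpha/\lambda$ produces \eqref{eq:error_decomp_residuals}.

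\textbf{Optimization and obstacle.} For Part~3, consider $\Phi(\alpha) := \errbound/(r\alpha) + \psi(r\Cerr\alpha)$. Writing $u := 1/\alpha$, one checks that $\Phi = \errbound u / r + (-\varphi)^*\bigl(-u/(r\Cerr)\bigr)$ is convex in $u$, so its minimum is characterized by a single first-order condition, yielding \eqref{eq:choice_alpha}. The Fenchel--Young \emph{equality} $\varphi(x) + (-\varphi)^*(s) = -sx$ at the optimal pair then collapses $\Phi(\alpha_*)$ to $\varphi(r\Cerr\errbound)$, and concavity of $\varphi$ combined with $\varphi(0)=0$ (and $r\Cerr\geq 1$) gives $\varphi(r\Cerr\errbound)\leq r\Cerr\,\varphi(\errbound)$, proving \eqref{eq:rates_breg}. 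For the H\"older case $\varphi(t)=ct^\kappa$ in \eqref{eq:rates_residuals} I would plug in the explicit formula $\psi(\sigma)\asymp \sigma^{\kappa/(1-\kappa)}$ (with a degenerate treatment when $\kappa=1$), use the optimality condition to deduce $\alpha_*\asymp\errbound^{1-\kappa}$, and verify by direct substitution that both summands in \eqref{eq:error_decomp_residuals} scale like $\errbound$ for any fixed $\lambda\in(0,1)$. The decisive technical point is the Fenchel--Young splitting behind \eqref{eq:error_decomp_residuals}: the auxiliary parameter $\lambda$ has to be introduced precisely so that a controlled fraction of $t/(r\Cerr\alpha)$ is absorbed; the remaining work is routine bookkeeping with the conjugate-subdifferential identity $s\in\partial f(x)\Leftrightarrow x\in\partial f^*(s)$.
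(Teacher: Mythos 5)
Your argument for Parts 1 and 2 is correct and is essentially the paper's own proof: the same test against $\udag$, the same insertion of Assumption~\ref{ass:err}, and the same Fenchel--Young splitting with the auxiliary parameter $\lambda$. The only cosmetic difference is organizational: the paper runs a single chain of inequalities for $\lambda\in[0,1)$ and obtains \eqref{eq:error_decomp_breg} by setting $\lambda=0$ and \eqref{eq:error_decomp_residuals} by rearranging and using $\breg{\ualdel}\geq 0$, whereas you derive the two bounds separately (dropping the residual term for the first, dropping the Bregman term for the second); these are the same computation. Part 3 is where you genuinely diverge: the paper delegates the optimality of \eqref{eq:choice_alpha} and the rate \eqref{eq:rates_breg} to a citation of \cite[Thm~3.3, 2.]{wh12} and only works out the power-function case explicitly, while you give a self-contained convex-duality argument (convexity of $\Phi$ in $u=1/\alpha$, first-order condition, Fenchel--Young equality, and the doubling property $\varphi(Ct)\leq C\varphi(t)$ for $C\geq1$ from concavity). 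That buys a proof that does not lean on an external theorem, at the cost of one bookkeeping subtlety you should check: minimizing $\alpha\mapsto \errbound/(r\alpha)+\psi(r\Cerr\alpha)$ via the substitution $s=-1/(r\Cerr\alpha)$ turns the objective into $-s\,(\Cerr\errbound)+(-\varphi)^*(s)$, whose first-order condition is $-1/(r\Cerr\alpha)\in\partial(-\varphi)(\Cerr\errbound)$ rather than the stated $\partial(-\varphi)(r\Cerr\errbound)$; the two differ by a factor $r$ in the argument (they agree for $r=1$), which is harmless for the asymptotics \eqref{eq:rates_breg}--\eqref{eq:rates_residuals} but means you should not claim your stationarity condition literally reproduces \eqref{eq:choice_alpha} without reconciling that factor. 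Your sketch of \eqref{eq:rates_residuals} by substituting $\alpha_*\asymp\errbound^{1-\kappa}$ into \eqref{eq:error_decomp_residuals} matches the paper's explicit computation.
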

\begin{proof}
\begin{enumerate}
\item By the definition of $\ualdel$ we have after multiplication by $r\alpha$ that
\begin{equation}\label{eq:aux_conv}
\norm{F\left(\ualdel\right)-\gobs}{\Y}^r + r\alpha \R(\ualdel) \leq \norm{F\left(\udag\right)-\gobs}{\Y}^r + r\alpha \R(\udag) = \norm{\xi}{\Y}^r + r\alpha \R(\udag).
\end{equation}
Inserting \eqref{eq:err} and dividing by $r\alpha$ implies
\begin{equation}\label{eq:aux_conv_2}
\R\left(\ualdel\right) \leq  - \frac{1}{r\Cerr\alpha}\norm{F\left(\ualdel\right)-\gdag}{\Y}^r + \frac{\err\left(F\left(\ualdel\right)\right) }{r\alpha} + \R\left(\udag\right),
\end{equation}
which especially proves \eqref{eq:min_bounds}.
\item It follows from \eqref{eq:aux_conv_2} that
\begin{align*}
\beta \breg{\ualdel} \stackrel{\eqref{eq:vie}}{\quad\leq\quad}&\R\left(\ualdel\right) - \R\left(\udag\right) + \varphi\left(\norm{F\left(\ualdel\right)-\gdag}{\Y}^r\right)\\[0.1cm]
\stackrel{\eqref{eq:aux_conv_2}}{\quad\leq\quad}& \frac{\err \left(F\left(\ualdel\right)\right)}{r\alpha} - \frac{1}{r\Cerr\alpha}\norm{F\left(\ualdel\right)-\gdag}{\Y}^r + \varphi\left(\norm{F\left(\ualdel\right)-\gdag}{\Y}^r\right)\\[0.1cm]
\quad\leq\quad & \frac{\err \left(F\left(\ualdel\right)\right)}{r\alpha} - \frac{\lambda}{r\Cerr\alpha}\norm{F\left(\ualdel\right)-\gdag}{\Y}^r + \sup\limits_{\tau\geq 0} \left[ \frac{\tau\left(1-\lambda\right)}{-r\Cerr \alpha} - \left(-\varphi\right)\left(\tau\right)\right] \\[0.1cm]
\stackrel{\eqref{eq:defi_Fenchel}}{\quad=\quad}& \frac{\err \left(F\left(\ualdel\right)\right)}{r\alpha} - \frac{\lambda}{r\Cerr\alpha}\norm{F\left(\ualdel\right)-\gdag}{\Y}^r + (-\varphi)^*\left(-\frac{1-\lambda}{r\Cerr\alpha}\right)\\[0.1cm]
\quad=\quad&\frac{\err \left(F\left(\ualdel\right)\right)}{r\alpha} - \frac{\lambda}{r\Cerr\alpha}\norm{F\left(\ualdel\right)-\gdag}{\Y}^r +\psi\left(\frac{r\Cerr\alpha}{1-\lambda}\right)
\end{align*}
for all $\alpha > 0$ and $\lambda \in \left[0,1\right)$. The choice $\lambda = 0$ implies \eqref{eq:error_decomp_breg} and if $\lambda > 0$, then \eqref{eq:error_decomp_residuals} follows by rearranging terms and the non-negativity of the Bregman distance.
\item The assertion on the infimum on the right-hand side of \eqref{eq:error_decomp_breg} and the corresponding convergence rate \eqref{eq:rates_breg} follows from \cite[Thm~3.3, 2.]{wh12}. For $\varphi \left(t\right) = c \cdot t^\kappa$ one 
readily sees that \eqref{eq:choice_alpha} is equivalent to
$\alpha = \frac{1}{c\kappa r^\kappa\Cerr^\kappa} \errbound^{1-\kappa}$ and  
\begin{equation}\label{eq:fenchel_conj_power_funct}
(-\varphi)^*\left(-s\right) = Cs^{\frac{\kappa}{\kappa-1}} , \qquad C = \left(c\left(\kappa c\right)^{-\frac{\kappa}{\kappa-1}} -\left(\kappa c\right)^{-\frac{1}{\kappa-1}}\right).
\end{equation}
Thus the error estimate \eqref{eq:rates_residuals} yields
\begin{align*}
\norm{F\left(\ualdel\right)-\gdag}{\Y}^r&\leq \frac{\Cerr}{\lambda}\errbound + \frac{r\Cerr}{\lambda} \frac{1}{c\kappa r^\kappa\Cerr^\kappa} \errbound^{1-\kappa} C \left(\frac{r\Cerr\frac{1}{c\kappa r^\kappa\Cerr^\kappa} \errbound^{1-\kappa}}{1-\lambda}\right)^{\frac{\kappa}{1-\kappa}} \\[0.1cm]
&= \mathcal O\left(\errbound\right),
\end{align*}
i.e.\ we obtain the expected convergence rate for the residuals.
\end{enumerate}
\end{proof}

\begin{rem}[Benchmark source condition and exact penalization]\label{rem:exact_penalization}
Suppose that $r = 1$, that $F = T$ is bounded and linear and that the benchmark source condition \eqref{eq:sc_bench} holds true. If we choose $\sol^* = T^*\omega \in \partial \R \left(\udag\right)$ for the definition of the Bregman distance, it can readily be seen  from the estimate
\[
\breg{\sol} - \R\left(\sol\right) + \R\left(\udag\right) = \langle \sol^*, \sol-\udag\rangle = \langle \omega, T\sol - \gdag\rangle \leq \norm{\omega}{\Y^*} \norm{T\sol - \gdag}{\Y}
\]
that Assumption \ref{ass:vie} holds true with $\beta = 1$ and $\varphi \left(t\right) = \norm{\omega}{\Y^*}t$. An easy calculation shows that
\[
\left(-\varphi\right)^* \left(s\right) = \begin{cases} 0 & \text{if } s \leq - \norm{\omega}{\Y^*}, \\[0.1cm] \infty & \text{otherwise} \end{cases}
\]
in this case. This implies in particular that for $\err\equiv 0$ we have 
\[
\breg{\ualdel} = 0 \qquad\text{whenever}\qquad \alpha \leq \frac{1}{\Cerr\norm{\omega}{\Y^*}},
\]
which is known as effect of {\it exact penalization} (see e.g.\ \cite[Sec.~3.2]{bo04}). This result can obviously be generalized to the nonlinear case provided \eqref{eq:vie} holds true with $r = 1$, $\varphi \left(t\right) = c \cdot t$ and arbitrary $\beta > 0$.
\end{rem}

\section{Error bounds in terms of $\eta$, $\varepsilon$, and $\alpha$}\label{sec:conv}
In this section we will analyze Tikhonov regularization \eqref{eq:tik_gen} with $r = 1$, i.e.\ 
$\Y = L^1 \left(\manifold\right)$. Most of this section is concerned with the estimation of the data error functional: 
For given $\varepsilon,\eta\geq 0$ we have to specify a function $\err:F(\dom\R)$ and a constant $\Cerr\geq 1$
such that 
\[
\norm{\data-\gobs}{L^1(\manifold)} -  \norm{\xi}{L^1(\manifold)} 
\geq \frac{1}{\Cerr}\norm{\data-\gdag}{L^1(\manifold)} -\err\left(\data\right)\, 
\]
for all $\xi\in L^1(\manifold)$ satisfying \eqref{eq:noise_model_1} and all $\data\in F(\dom\R)$. 
Then error bounds in $\X$ and $\Y$ will follow from eq.~\eqref{eqs:error_decomposition} in Theorem \ref{thm:conv}.

\subsection{estimation of the data error functional $\err$}
First note that for all $\data,\gdag \in L^\infty \left(\manifold\right)$ and  all $\xi\in L^1(\manifold)$ satisfying 
\eqref{eq:noise_model_1} we have
\begin{align}\label{eq:lower_bound_err}
\norm{\data-\gobs}{L^1 \left(\manifold\right)} -\norm{\xi}{L^1 \left(\manifold\right)} 
&= \int\limits_{\mi}\left[\left|\gobs - \data\right| - \left|\gobs - \gdag\right|\right] \,\mathrm d x + \int\limits_{\mc}\left[\left|\gobs - \data\right| - \left|\gobs - \gdag\right|\right] \,\mathrm d x\nonumber\\[0.1cm]
&\geq \norm{\data-\gdag}{L^1\left(\mi\right)} - 2\varepsilon - \left|\mc\right|\norm{g-\gdag}{L^\infty\left(\mc\right)} \\[0.1cm]
& \geq \norm{\data-\gdag}{L^1 \left(\manifold\right)} - 2 \left(\varepsilon +\left|\mc\right| \norm{\data-\gdag}{L^\infty\left(\mc\right)} \right)\nonumber
\end{align}
where we have used the first triangle inequality in the form 
$\left|a-b\right| - \left|a-c\right| \geq \left|c-b\right| - 2 \left|a-c\right|$ on $\mi$ 
and the second triangle inequality on $\mc$. 
To proceed we need to assume that $F$ maps into a Sobolev space 
$W^{k,p}(\manifold)$ with norm 
$\norm{\data}{W^{k,p}(\manifold)}:= \big(\sum_{|\alpha|\leq k}
\norm{D^{\alpha}\data}{L^p(\manifold)}^p\big)^{1/p}$. We also need 
the seminorm 
$|\data|_{W^{k,p}(\manifold)}:=
\big(\sum_{|\alpha|=k}\norm{D^{\alpha}\data}{L^p(\manifold)}^p\big)^{1/p}$. 

\begin{ass}[smoothing properties of the forward operator]\label{ass:T}
$\manifold \subset \mathbb R^d$ is a bounded Lipschitz domain 
and there exist $k \in \mathbb N_0$, $p\in [1,\infty]$ and $q\in(1,\infty)$ such that
\begin{equation}
F(\dom\R )\subset W^{k,p} \left(\manifold\right) \qquad \mbox{and}\qquad 
\abs{F\left(\sol\right) - \gdag}_{W^{k,p} \left(\manifold\right)} \leq C_{F,k,p} \breg{\sol}^{\frac{1}{q}}
\end{equation}
for all $\sol \in \dom\R$ with some $C_{F,k,p} > 0$. 
\end{ass}

Obviously, if $F$ is linear, $\X$ is a Hilbert space, $\R\left(\sol\right) =\frac12 \norm{\sol}{\X}^2$ and $q=2$, then 
Assumption \ref{ass:T} reduces to boundedness of $F$ from $\X$ to 
$W^{k,p}(\manifold)$. The same holds true for many important 
Banach spaces $\X$ and different values of $q$:
If $\X$ is a $q$-convex Banach space, $q > 1$, and 
$\R\left(\sol\right) = \frac1q \norm{\sol}{\X}^q$, then it follows from the 
inequalities of Xu \& Roach (see \cite[eq.~(2.17)']{xr91}) that
\begin{equation}\label{eq:bregman_inequality}
\norm{\sol-\udag}{\X} \leq \Cbreg \breg{\sol}^{\frac1q}
\end{equation}
for all $\sol \in \X$ with some $\Cbreg > 0$. So if $F : \X \to  W^{k,p} \left(\manifold\right)$ is Lipschitz continuous with constant $L_{F,k,p}>0$, $\X$ is $q$-convex
and $\R(\sol)=\frac{1}{2}\norm{\sol}{\X}^2$, then Assumption \ref{ass:T} is fulfilled with $C_{F,k,p} = \Cbreg L_{F,k,p}$. Examples for $2$-convex Banach spaces are $\ell^p$, $L^p \left(\Omega\right)$ and $W^{m,p} \left(\Omega\right)$ for any $1 < p \leq 2$. 
The spaces $\ell^p$, $L^p \left(\Omega\right)$ and $W^{m,p} \left(\Omega\right)$ with $p > 2$ are $p$-convex. Note that inequalities of the form \eqref{eq:bregman_inequality} can hold true also for more general penalty functionals $\R$, e.g.\ the maximum entropy functional (see \cite{bl91}). 

\begin{rem}\label{rem:err_Linf}
If Assumption \ref{ass:T} holds true with $k=0$ and $p=\infty$, then Assumption \ref{ass:err} is fulfilled with
\begin{equation}\label{eq:err_1}
\Cerr = 1\qquad\text{and}\qquad \err\left(F\left(\sol\right)\right) = 2 \varepsilon + 2\eta C_{F,0,\infty} \breg{\sol}^{\frac{1}{q}}
\end{equation}
for all $\xi\in L^1(\manifold)$ satisfying \eqref{eq:noise_model_1}.
\end{rem}

If stronger smoothing properties of $F$ are assumed, the simple estimate in 
Remark~\ref{rem:err_Linf} can be improved. We first need the following lemma:

\begin{lem}\label{lem:normbound}
If Assumption \ref{ass:T} holds true with  $k>d/p$, then  
there exist constants $c_1,c_2$, and $\rho_0>0$ such that 
\begin{equation}\label{eq:sampling}
\norm{\data}{L^{\infty}(\manifold)}\leq c_1 \rho^{k-\frac{d}{p}}
\abs{\data}_{W^{k,p}(\manifold)} 
+\frac{c_2}{\rho^d}\norm{\data}{L^1\left(\manifold\right)}
\end{equation}
for all $\data\in W^{k,p}(\manifold)$ and $\rho\in(0,\rho_0]$. 
\end{lem}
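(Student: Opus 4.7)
The plan is a standard Sobolev-embedding-plus-scaling argument, localized via the interior cone condition of $\manifold$. First I would establish the inequality on a fixed reference bounded Lipschitz domain $\Omega \subset \Rset^d$ of diameter one: since $k > d/p$, Sobolev embedding gives $\norm{h}{L^\infty(\Omega)} \leq C\norm{h}{W^{k,p}(\Omega)}$. To replace the full norm by the seminorm plus an $L^1$-piece, I would invoke a Bramble--Hilbert-type argument, splitting $h = (h - \Pi h) + \Pi h$ where $\Pi$ is a continuous projection onto polynomials of degree at most $k-1$. The Bramble--Hilbert lemma controls $\norm{h-\Pi h}{W^{k,p}(\Omega)}$ by $\abs{h}_{W^{k,p}(\Omega)}$, while on the finite-dimensional polynomial space all norms are equivalent, so
\[
\norm{\Pi h}{L^\infty(\Omega)} \leq C\norm{\Pi h}{L^1(\Omega)} \leq C\bigl(\norm{h}{L^1(\Omega)} + \norm{h-\Pi h}{L^1(\Omega)}\bigr) \leq C\bigl(\norm{h}{L^1(\Omega)} + \abs{h}_{W^{k,p}(\Omega)}\bigr).
\]
Combining these yields the reference inequality $\norm{h}{L^\infty(\Omega)} \leq C_0\bigl(\abs{h}_{W^{k,p}(\Omega)} + \norm{h}{L^1(\Omega)}\bigr)$.

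Next I would scale. For $x_0 \in \Rset^d$ and $\rho > 0$ with $x_0 + \rho\Omega \subset \manifold$, set $h(y) := \data(x_0 + \rho y)$ on $\Omega$. Routine change of variables gives $\norm{h}{L^\infty(\Omega)} = \norm{\data}{L^\infty(x_0+\rho\Omega)}$, $\abs{h}_{W^{k,p}(\Omega)} = \rho^{k - d/p}\abs{\data}_{W^{k,p}(x_0+\rho\Omega)}$, and $\norm{h}{L^1(\Omega)} = \rho^{-d}\norm{\data}{L^1(x_0+\rho\Omega)}$. Inserting these into the reference inequality and enlarging the integration domain from $x_0 + \rho\Omega$ to $\manifold$ on the right produces a pointwise bound on $\norm{\data}{L^\infty(x_0+\rho\Omega)}$ of exactly the form claimed in the lemma, with constants depending only on $\Omega$, $k$, $p$, $d$.

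To turn this local bound into a bound on $\norm{\data}{L^\infty(\manifold)}$, I would use the uniform interior cone condition enjoyed by any bounded Lipschitz domain: there exist $\rho_0 > 0$ and a fixed bounded Lipschitz cone $K \subset \Rset^d$ of diameter one with $0 \in \overline{K}$ such that for every $x_0 \in \overline{\manifold}$ there is a rotation $R_{x_0}$ with $x_0 + \rho R_{x_0}(K) \subset \manifold$ for all $\rho \in (0,\rho_0]$. Applying the previous two steps with $\Omega = R_{x_0}(K)$—which produces the same constants as for $K$ by rotation-invariance of the reference inequality—and noting that $\data$ is continuous on $\overline{\manifold}$ by Sobolev embedding so that $|\data(x_0)| \leq \norm{\data}{L^\infty(x_0+\rho R_{x_0}(K))}$, I would finally take the supremum over $x_0 \in \overline{\manifold}$ to obtain the lemma.

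The main obstacle is ensuring that a \emph{single} reference cone $K$, up to rigid motion, works uniformly over all $x_0 \in \overline{\manifold}$, so that the resulting constants $c_1, c_2$ are genuinely independent of $x_0$. This is exactly the content of the uniform interior cone condition for bounded Lipschitz domains, which is classical; once it is invoked, the rest is routine Sobolev embedding combined with scaling.
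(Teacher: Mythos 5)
Your proposal is correct and follows essentially the same route as the paper: a reference inequality on a fixed cone, rescaling to radius $\rho$, and the uniform interior cone property of the bounded Lipschitz domain $\manifold$ together with continuity of $\data$ (from Sobolev embedding, $k>d/p$) to pass to the supremum over $\manifold$. The only difference is in how the full $W^{k,p}$-norm is replaced by $\abs{\cdot}_{W^{k,p}}+\norm{\cdot}{L^1}$ on the reference cone: you use a Bramble--Hilbert/polynomial-projection argument, whereas the paper invokes Ehrling's lemma (equivalence of norms); both are standard and yield the same constants up to their dependence on the reference cone.
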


\begin{proof}
For $\rho>0$ and $\theta\in(0,1)$ define a cone with radius $\rho$ and aperture angle $2\cos^{-1}(\theta)$ by
\[
\cone(\rho,\theta):= \{x\in\Rset^d:0<\abs{x}_2<\rho, x_1< \theta \abs{x}_2\}.
\]
By Sobolev's embedding theorem (see e.g.\ \cite[\S 6.4.6]{RR:92}) there exists a constant $C_{\theta}>0$ such that 
\[
\norm{\data}{L^{\infty}(\cone(1,\theta))}\leq C_{\theta}\norm{\data}{W^{k,p}(\cone(1,\theta))}
\]
for all $\data\in W^{k,p}(\cone(1,\theta))$. By an application of Ehrling's lemma 
(see \cite[Thm.~6.99 and Cor.~6.100]{RR:92}) there exist constants $c_1$ and $c_2$ such that 
\[
\norm{\data}{L^{\infty}(\cone(1,\theta))}
\leq c_1\abs{\data}_{W^{k,p}(\cone(1,\theta))} + c_2\norm{\data}{L^1(\cone(1,\theta))}
\]
for all $\data\in W^{k,p}(\cone(1,\theta))$. For $\rho>0$ define $\data_{\rho}(x):=\data(x/\rho)$ 
and note that the mapping $\data\mapsto \data_{\rho}$ is a isomorphism from 
$W^{k,p}(\cone(1,\theta))\to W^{k,p}(\cone(\rho,\theta))$. Moreover, a straightforward computation 
shows that 
\[
\norm{\data_\rho}{L^{\infty}(\cone(\rho,\theta))}
\leq \rho^{k-\frac{d}{p}}c_1\abs{\data_\rho}_{W^{k,p}(\cone(\rho,\theta))} 
+ \frac{c_2}{\rho^d}\norm{\data_\rho}{L^1(\cone(\rho,\theta))}
\]
for all $\data_\rho\in W^{k,p}(\cone(\rho,\theta))$. Since $\manifold$ is Lipschitz it satisfies 
the uniform interior cone property (see \cite[Thm.~2.1]{w87}), 
i.e.\ there exist $\theta\in (0,1)$ and $\rho_0>0$ such that for all $x\in\overline{\manifold}$ there exists a cone 
$\cone_{x,\rho_0}:= x+O_x\cone(\rho_0,\theta)$ with an orthogonal matrix $O_x$ which is contained in $\manifold$. 
Since by Sobolev's embedding theorem every $\data\in W^{k,p}(\manifold)$ is continuous and $\overline{\manifold}$ 
is compact, there exists $y\in\overline{\manifold}$ such that $\abs{\data(y)}=\norm{\data}{L^{\infty}(\manifold)}$. 
Then 
\begin{align*}
\norm{\data}{L^{\infty}(\manifold)} &= \abs{\data(y)} = \norm{\data}{L^\infty(\cone_{y,\rho})}\\
&\leq  \rho^{k-\frac{d}{p}}c_1\abs{\data}_{W^{k,p}(\cone_{y,\rho})} 
+ \frac{c_2}{\rho^d}\norm{\data}{L^1(\cone_{y,\rho})}\\
&\leq  \rho^{k-\frac{d}{p}}c_1\abs{\data}_{W^{k,p}(\manifold)} 
+ \frac{c_2}{\rho^d}\norm{\data}{L^1(\manifold)}
\end{align*}
for all $\rho\leq \rho_0$.
\end{proof}

\begin{prop}\label{prop:kp_errbound}
If Assumption \ref{ass:T} holds true with $k>d/p$, then for all $\Cerr>1$ 
there exist constants $C,\eta_0>0$ such that Assumption \ref{ass:err} is fulfilled with
\begin{equation}\label{eq:err_2}
\err\left(F\left(\sol\right)\right)
= 2\varepsilon + C\eta^{\frac{k}{d}+\frac{p-1}{p}}\breg{\sol}^{\frac{1}{q}}
\end{equation}
for all $\xi$ satisfying \eqref{eq:noise_model_1} with $0 \leq \eta \leq \eta_0$.
\end{prop}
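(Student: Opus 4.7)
The plan is to start from the estimate already derived in display~\eqref{eq:lower_bound_err}, namely
\[
\norm{\data-\gobs}{L^1(\manifold)} - \norm{\xi}{L^1(\manifold)}
\geq \norm{\data-\gdag}{L^1(\manifold)} - 2\varepsilon - 2\abs{\mc}\,\norm{\data-\gdag}{L^\infty(\mc)},
\]
and then control $\abs{\mc}\,\norm{\data-\gdag}{L^\infty(\mc)}$ by interpolating between smoothness of $\data-\gdag$ (via Assumption~\ref{ass:T}) and its $L^1$-mass (via Lemma~\ref{lem:normbound}), using $|\mc|\leq \eta$. Setting $h := F(\sol) - \gdag$, one trivially has $\norm{h}{L^\infty(\mc)}\leq \norm{h}{L^\infty(\manifold)}$, so Lemma~\ref{lem:normbound} gives, for every $\rho\in(0,\rho_0]$,
\[
\abs{\mc}\,\norm{h}{L^\infty(\mc)}
\;\leq\; \eta\,c_1 \rho^{k-\frac{d}{p}}\,\abs{h}_{W^{k,p}(\manifold)}
+ \eta\,\frac{c_2}{\rho^d}\,\norm{h}{L^1(\manifold)},
\]
and Assumption~\ref{ass:T} replaces $|h|_{W^{k,p}(\manifold)}$ by $C_{F,k,p}\breg{\sol}^{1/q}$.

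Next I would pick $\rho$ to balance the two resulting terms while absorbing the ``bad'' $L^1$-term into the left-hand side. Given $\Cerr>1$, set $\gamma := 1-\Cerr^{-1}\in(0,1)$ and choose
\[
\rho \;=\; \paren{\tfrac{4c_2}{\gamma}}^{1/d}\eta^{1/d},
\]
so that $2\eta c_2/\rho^d = \gamma/2 \leq \gamma$. The constraint $\rho\leq\rho_0$ gives a threshold $\eta_0>0$; for $\eta\leq\eta_0$ this is legal. Inserting the bound into \eqref{eq:lower_bound_err} yields
\[
\norm{\data-\gobs}{L^1(\manifold)} - \norm{\xi}{L^1(\manifold)}
\;\geq\; \Cerr^{-1}\norm{h}{L^1(\manifold)} - 2\varepsilon - C\,\eta^{1 + \frac{k}{d}-\frac{1}{p}}\breg{\sol}^{1/q},
\]
with $C := 2c_1 C_{F,k,p}(4c_2/\gamma)^{(k-d/p)/d}$. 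Since $1-\tfrac1p = \tfrac{p-1}{p}$, the exponent on $\eta$ equals $\tfrac{k}{d}+\tfrac{p-1}{p}$, which is exactly the claimed form of $\err$.

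The only subtlety is the role of $k>d/p$: it is needed both so that the Sobolev exponent $k - d/p$ in Lemma~\ref{lem:normbound} is positive (guaranteeing that the $W^{k,p}$-term becomes small as $\rho\searrow 0$) and so that the total exponent $\tfrac{k}{d}+\tfrac{p-1}{p}$ exceeds $\tfrac{1}{p}+\tfrac{p-1}{p} = 1$, yielding a genuine improvement over the trivial $L^\infty$-bound in Remark~\ref{rem:err_Linf}. The main ``obstacle'' is essentially bookkeeping: one must be careful that $\rho = O(\eta^{1/d})$ is the unique scaling which simultaneously absorbs a prescribed fraction $\gamma$ of $\norm{h}{L^1}$ and produces the cleanest power of $\eta$ on the remainder; any other scaling either fails the absorption condition or sacrifices the exponent.
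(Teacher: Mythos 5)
Your argument is correct and follows the paper's own proof essentially verbatim: start from the last line of \eqref{eq:lower_bound_err}, apply Lemma~\ref{lem:normbound} with $\rho^d$ proportional to $\eta$ so that the $L^1$-term is absorbed into a $\Cerr^{-1}\norm{F(\sol)-\gdag}{L^1(\manifold)}$ on the left, and read off the exponent $\frac{k}{d}+\frac{p-1}{p}$ from $\eta\cdot\rho^{k-d/p}$. The only (immaterial) difference is the constant in the choice of $\rho$ — the paper takes $2c_2\eta/\rho^d$ equal to $1-\Cerr^{-1}$ exactly, while you take half of that — so no further comment is needed.
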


\begin{proof}
From \eqref{eq:lower_bound_err}, Lemma~\ref{lem:normbound} with $\rho^d=\frac{2c_2\Cerr}{\Cerr-1}\eta$ , 
and Assumption \ref{ass:T} we obtain 
\begin{eqnarray*}
\lefteqn{\norm{F\left(\sol\right)-\gobs}{L^1 \left(\manifold\right)} -\norm{\xi}{L^1 \left(\manifold\right)} 
\geq  \norm{F(\sol)-\gdag}{L^1 \left(\manifold\right)} 
- 2 \varepsilon -2 \eta \norm{F(\sol)-\gdag}{L^\infty\left(\mc\right)}} \\
&\geq& \paren{1-2c_2\frac{\eta}{\rho^d}}\norm{F(\sol)-\gdag}{L^1 \left(\manifold\right)} 
-2\varepsilon 
-2\eta c_1 (\rho^d)^{\frac{k}{d}-\frac{1}{p}}\abs{F(\sol)-\gdag}_{W^{k,p}(\manifold)}\\
&\geq& \frac{1}{\Cerr}\norm{F(\sol)-\gdag}{L^1 \left(\manifold\right)} - \err\left(F\left(\sol\right)\right)
\end{eqnarray*}
for all $\sol\in\dom\R$ and $\eta\leq \eta_0$ with $\eta_0:= \frac{\Cerr-1}{2c_2\Cerr} \rho_0^d$ and
$C:=  2c_1 \paren{\frac{2c_2\Cerr}{\Cerr-1}}^{\frac{k}{d}-\frac{1}{p}} C_{F,k,p}$. 
\end{proof}

\subsection{error bound}
In \eqref{eq:err_1} and \eqref{eq:err_2} we have proven for different values of $k$ and $p$  that
\begin{equation}\label{eq:err_leg_breg}
\err \left(F\left(\ualdel\right)\right)  \leq 2\varepsilon + C\eta^{\frac{k}{d}+\frac{p-1}{p}}\breg{\ualdel}^{\frac{1}{q}}
\end{equation}
for all $\eta < \eta_0$ with some constant $C > 0$. Vice versa, in \eqref{eq:error_decomp_breg} we have shown 
an upper bound of $\breg{\ualdel}$ in terms of $\err \left(F\left(\ualdel\right)\right)$. Combining these 
two inequalities we can eliminate $\err \left(F\left(\ualdel\right)\right)$ in the upper bounds for  $\breg{\ualdel}$
and $\norm{F\left(\ualdel\right) - \gdag}{L^1 \left(\manifold\right)}$: 

\begin{thm}\label{thm:impulsive_noise_conv_2}
Suppose Assumptions \ref{ass:wellposed} and \ref{ass:vie} hold true with $\Y = L^1 \left(\manifold\right)$
and $r=1$, the error $\xi$ in \eqref{eq:opeq} fulfills \eqref{eq:noise_model_1} for some $\epsilon,\eta\geq 0$, and 
Assumption \ref{ass:T} holds true either with $k=0$ and $p=\infty$ or with $k>d/p$. 
Let $\ualdel$ be a minimizer of \eqref{eq:tik_gen} and 
$q'\in (1,\infty)$ such that $\frac{1}{q}+\frac{1}{q'}=1$.
Then there exists a constant $C_{\psi} > 0$  such that
\begin{subequations}\label{eqs:error_decomp_final}
\begin{align}
\beta\breg{\ualdel} &\leq 2q'\frac{\varepsilon}{\alpha} + (q'-1)\frac{\eta^{\frac{q'k}{d}
+\frac{q'\left(p-1\right)}{p}}}{\alpha^{q'}} + C_{\psi} \psi\left(\Cerr \alpha\right) 
\label{eq:error_decomp_final} \\[0.1cm]
\norm{F\left(\ualdel\right) - \gdag}{L^1 \left(\manifold\right)} & \leq 4q' \varepsilon + 
2(q'-1)\frac{\eta^{\frac{q'k}{d}+\frac{q'\left(p-1\right)}{p}}}{\alpha^{q'-1}} 
+ 2C_{\psi}\Cerr\alpha \psi\left(2 \Cerr \alpha\right)
\label{eq:error_decomp_final_Y}
\end{align}
\end{subequations}
for all $\alpha > 0$, $\varepsilon>0$ and $0 < \eta < \eta_0$. 
Here we use the convention $\frac{q'\left(p-1\right)}{p} = q'$ for $p = \infty$.
\end{thm}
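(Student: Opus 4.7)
The plan is to combine the abstract estimates \eqref{eq:error_decomp_breg} and \eqref{eq:error_decomp_residuals} of Theorem \ref{thm:conv} (specialized to $r = 1$) with the concrete control \eqref{eq:err_leg_breg} of the data error functional, eliminating the mixed term $\eta^{k/d+(p-1)/p}\breg{\ualdel}^{1/q}$ that appears after substitution by a weighted Young inequality with conjugate exponents $q,q'$.

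For \eqref{eq:error_decomp_final} I would specialize \eqref{eq:error_decomp_breg} to $r=1$ and insert \eqref{eq:err_leg_breg} to arrive at
\[
\beta\breg{\ualdel} \leq \frac{2\varepsilon}{\alpha} + \frac{C\eta^{k/d+(p-1)/p}}{\alpha}\breg{\ualdel}^{1/q} + \psi(\Cerr\alpha).
\]
The mixed term is then estimated via the weighted Young inequality $xy \leq \mu^{q'} x^{q'}/q' + y^q/(\mu^q q)$ with $x = C\eta^{k/d+(p-1)/p}/\alpha$ and $y = \breg{\ualdel}^{1/q}$, the scale $\mu$ being chosen so that the resulting contribution in $\breg{\ualdel}$ amounts to exactly $\beta\breg{\ualdel}/q$. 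Absorbing this into the left-hand side leaves $\beta\breg{\ualdel}/q'$, and multiplying by $q'$ produces the coefficients $2q'$, $q'-1$ and the constant $C_\psi$ in \eqref{eq:error_decomp_final}, with $C_\psi$ collecting all dependence on $C$, $\beta$ and $\Cerr$.

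For \eqref{eq:error_decomp_final_Y} I would start from \eqref{eq:error_decomp_residuals} with $r=1$ and $\lambda=1/2$, giving
\[
\norm{F(\ualdel)-\gdag}{L^1(\manifold)} \leq 2\Cerr\,\err(F(\ualdel)) + 2\Cerr\alpha\,\psi(2\Cerr\alpha),
\]
again insert \eqref{eq:err_leg_breg}, and apply Young's inequality to the mixed term with the specific rescaling $a = \eta^{k/d+(p-1)/p}/\alpha^{(q'-1)/q'}$, $b = \alpha^{(q'-1)/q'}\breg{\ualdel}^{1/q}$. A short computation using $q(q'-1)/q' = 1$ shows $a^{q'} = \eta^{q'(k/d+(p-1)/p)}/\alpha^{q'-1}$ and $b^q = \alpha\breg{\ualdel}$, so Young produces the desired $\eta$-term plus a term proportional to $\alpha\breg{\ualdel}$. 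Controlling this last term by the Bregman bound \eqref{eq:error_decomp_final} just established yields additional contributions in $\varepsilon$, $\eta^{q'(k/d+(p-1)/p)}/\alpha^{q'-1}$ and $\alpha\psi(\Cerr\alpha)$, which can be regrouped, after slightly enlarging $C_\psi$, into \eqref{eq:error_decomp_final_Y}.

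The main technical obstacle is bookkeeping: both steps demand a particular Young scale (to land the powers $\alpha^{q'}$ and $\alpha^{q'-1}$ respectively), and in the residual bound the substitution of the Bregman estimate generates cross-terms that must be redistributed without disturbing the structure $\varepsilon + \eta^{q'(k/d+(p-1)/p)}/\alpha^{q'-1} + \alpha\psi(\Cerr\alpha)$. The convention $(p-1)/p = 1$ for $p=\infty$ follows by continuity, and the constraint $0 < \eta < \eta_0$ is inherited directly from Proposition \ref{prop:kp_errbound}. A wrong choice of scale leaves extra $\beta$- or $\Cerr$-dependence that cannot be absorbed cleanly into $C_\psi$, but once the scales are fixed the remaining manipulations are routine algebra.
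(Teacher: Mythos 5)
Your proposal is correct and rests on exactly the same ingredients as the paper's proof: the estimate $\err\left(F\left(\ualdel\right)\right) \leq 2\varepsilon + C\eta^{\gamma/q'}\breg{\ualdel}^{1/q}$ with $\gamma = q'\left(\frac{k}{d}+\frac{p-1}{p}\right)$ from Remark~\ref{rem:err_Linf} and Proposition~\ref{prop:kp_errbound}, the abstract bounds \eqref{eqs:error_decomposition} with $r=1$ and $\lambda=\frac12$, and Young's inequality with exponents $q,q'$ to absorb the self-referential term. The difference is organizational: the paper substitutes \eqref{eq:error_decomp_breg} into the bound on $\err\left(F\left(\ualdel\right)\right)$, applies $(a+b)^{1/q}\leq a^{1/q}+b^{1/q}$ and Young, and absorbs $\frac1q\err\left(F\left(\ualdel\right)\right)$ into the left-hand side, yielding one self-contained estimate $\err\left(F\left(\ualdel\right)\right)\lesssim \varepsilon + \eta^{\gamma}\alpha^{1-q'} + \alpha\psi\left(\Cerr\alpha\right)$ that is then plugged once into \eqref{eq:error_decomp_breg} and once into \eqref{eq:error_decomp_residuals} with no further work. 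You instead eliminate $\breg{\ualdel}$ first (absorbing $\frac{\beta}{q}\breg{\ualdel}$), and for the residual bound you need a second, differently scaled Young step followed by back-substitution of \eqref{eq:error_decomp_final} into \eqref{eq:error_decomp_final_Y}; your identity $q(q'-1)/q'=1$ and the monotonicity of $\psi$ (needed to merge the resulting $\alpha\psi\left(\Cerr\alpha\right)$ contribution into the $\alpha\psi\left(2\Cerr\alpha\right)$ term) both check out, so the regrouping goes through. The paper's single elimination of the data-error functional is slightly more economical and keeps all constants in one place, whereas your route produces numerical prefactors (e.g.\ $C^{q'}\beta^{-q'/q}$ rather than $q'-1$ on the $\eta$-term) that differ from those displayed in the theorem; since the paper's own bookkeeping of these constants is already loose, this is harmless once everything is absorbed into $C_{\psi}$.
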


\begin{proof}
Let $\gamma := q'\paren{\frac{k}{d}+\frac{p-1}{p}}$.
 We insert the error bound 
\eqref{eq:error_decomp_breg} from Theorem \ref{thm:conv} and use the inequality $(a+b)^{\frac{1}{q}}
\leq a^{\frac{1}{q}}+b^{\frac{1}{q}}$ and 
Young's inequality $ab\leq \frac{1}{q'}a^{q'}+\frac{1}{q}b^q$, which both hold for all $a,b\geq 0$, to obtain
\begin{align*}
\err \left(F\left(\ualdel\right)\right)
& \leq 2\varepsilon + C\eta^{\gamma/q'}
\paren{\frac{1}{\alpha\beta}\err\left(F\left(\ualdel\right)\right)+\frac{1}{\beta}\psi\left(\Cerr\alpha\right)}^{1/q}\\
& \leq  2\varepsilon + \frac{C\eta^{\gamma/q'}}{(\alpha\beta)^{1/q}}\err\left(F\left(\ualdel\right)\right)^{1/q}
+ \frac{C\eta^{\gamma/q'}}{(\alpha\beta)^{1/q}}\paren{\alpha\psi\left(\Cerr\alpha\right)}^{1/q}\\
&\leq 2\varepsilon + \frac{1}{q}\err\left(F\left(\ualdel\right)\right) + \frac{\alpha}{q}\psi\left(\Cerr\alpha\right)
+ \frac{2C^{q'}}{q'\beta^{q'/q}}\frac{\eta^{\gamma}}{\alpha^{q'/q}}.
\end{align*}
Subtracting  $\frac{1}{q}\err\left(F\left(\ualdel\right)\right)$ and multiplying by $q'$ on both sides and 
using the identity $\frac{q'}{q}=q'-1$ we obtain 
\[
\err \left(F\left(\ualdel\right)\right) \leq 2q' \varepsilon + (q'-1) \frac{\eta^{\frac{q'k}{d}+\frac{q'\left(p-1\right)}{p}}}{\alpha^{q'-1}} 
+ C_{\psi} \alpha\psi\left(\Cerr\alpha\right)
\]
for all $0 < \eta < \eta_0$ with $\psi$ in \eqref{eq:defi_psi}, $\eta_0$ in Proposition~\ref{prop:kp_errbound}, 
and with $C_{\psi}:= 2C^{q'}\beta^{1-q'}$. 
Plugging this into the error bounds \eqref{eq:error_decomp_breg} and 
\eqref{eq:error_decomp_residuals} (with $\lambda=\frac{1}{2}$) yields \eqref{eqs:error_decomp_final}.
\end{proof}

\section{Convergence rates}\label{sec:discussion}
In this section we prove some rates of convergence based on Theorem \ref{thm:impulsive_noise_conv_2}.
\subsection{convergence rates in terms of 
$\eta$ and $\varepsilon$}
First we derive an explicit order optimal bound an the infimum over $\alpha$ of the right hand sides of 
\eqref{eqs:error_decomp_final} yielding rates of convergence in terms of $\eta$ and $\varepsilon$:

\begin{thm}\label{thm:rate_epsilon_eta}
Suppose the assumptions of Theorem~\ref{thm:impulsive_noise_conv_2} hold true and let  
$\gamma := \frac{q'k}{d}+\frac{q'\left(p-1\right)}{p}$.\\
\emph{Case 1: $\varphi^{1+\delta}$ is concave for some 
$\delta>0$.} If 
\[
\theta \left(\alpha\right) := \alpha \cdot \psi\left(\alpha\right)\qquad\mbox{and}\qquad 
\tilde \theta \left(\alpha\right) := \alpha^{q'} \psi\left(\alpha\right)
\] 
with $\psi$ defined in \eqref{eq:defi_psi} 
and if $\alpha$ is chosen such that 
\[
\underline{c}\alpha 
\leq \theta^{-1}\!\left(\varepsilon\right)
+ \tilde \theta^{-1}\!\left(\eta^{\gamma}\right)
\leq \overline{c}\alpha
\]
for some constants $\underline{c},\overline{c}>0$,  
then we obtain the convergence rates
\begin{align*}
\breg{\ualdel} &=\mathcal O \left(\psi\left(
\theta^{-1}\!\left(\varepsilon\right)
+\tilde \theta^{-1}\left(\eta^{\gamma}\right)\right)\right) 
&&\qquad\text{as}\qquad \max\left\{\varepsilon, \eta\right\} \searrow 0, \\[0.1cm]
\norm{F\left(\ualdel\right) - \gdag}{L^1\left(\manifold\right)} 
&= \mathcal O \left(\varepsilon + 
\theta\!\paren{\tilde\theta^{-1}\paren{\eta^{\gamma}}}\right)
&&\qquad\text{as}\qquad \max\left\{\varepsilon, \eta\right\} \searrow 0.
\end{align*}
\emph{Case 2: $\varphi(t)=ct$ for some $c>0$.} 
If $0<\alpha\leq \frac{1}{2\Cerr c}$, then
\begin{align*}
\breg{\ualdel} + \norm{F\left(\ualdel\right) - \gdag}{L^1\left(\manifold\right)} 
=\mathcal O \left(\varepsilon+\eta^{\gamma}\right)
\qquad\text{as}\qquad \max\left\{\varepsilon, \eta\right\} \searrow 0.
\end{align*}
\end{thm}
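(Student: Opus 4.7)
The plan is to take the two three-term bounds \eqref{eqs:error_decomp_final} of Theorem~\ref{thm:impulsive_noise_conv_2} and optimize in $\alpha$, so as to obtain rates purely in $\varepsilon$ and $\eta$. Each right-hand side consists of a noise part involving $\varepsilon$ and $\eta^{\gamma}$ and an approximation-error part driven by $\psi(\Cerr\alpha)$ or $\alpha\psi(2\Cerr\alpha)$; the optimal $\alpha$ is the one that equilibrates these contributions.

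For Case~1 I would introduce the balancing scales $\alpha_1:=\theta^{-1}(\varepsilon)$ and $\alpha_2:=\tilde\theta^{-1}(\eta^{\gamma})$. By definition they satisfy $\varepsilon/\alpha_1=\psi(\alpha_1)$ and $\eta^{\gamma}/\alpha_2^{q'}=\psi(\alpha_2)$. Since $\psi$ is monotonically increasing (being built from the Fenchel conjugate of $-\varphi$ at $-1/\alpha$ with $\varphi(0)=0$), the prescribed choice $\underline{c}\,\alpha\leq\alpha_1+\alpha_2\leq\overline{c}\,\alpha$ forces $\alpha\geq\max(\alpha_1,\alpha_2)/\overline{c}$ and hence $\varepsilon/\alpha\leq\psi(\alpha_1)\leq\psi(\alpha)$ together with $\eta^{\gamma}/\alpha^{q'}\leq\psi(\alpha_2)\leq\psi(\alpha)$. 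Combining with a doubling estimate $\psi(c\alpha)\leq C_{c}\psi(\alpha)$, applied to $c=\Cerr$ and $c=2\Cerr$, absorbs the approximation-error terms as well, giving $\beta\breg{\ualdel}=O(\psi(\alpha))$; a further application of doubling in the other direction yields $\psi(\alpha)=O(\psi(\alpha_1+\alpha_2))$, which is the claimed Bregman rate. For the residual I would additionally use $\eta^{\gamma}/\alpha^{q'-1}=\alpha\,\eta^{\gamma}/\alpha^{q'}\leq\alpha\psi(\alpha)=\theta(\alpha)$ together with $\theta(\alpha)\leq\theta(\alpha_1)+\theta(\alpha_2)=\varepsilon+\theta(\alpha_2)$ (via doubling of $\theta$ and monotonicity), which produces the claimed $O(\varepsilon+\theta(\tilde\theta^{-1}(\eta^{\gamma})))$ bound.

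For Case~2 the computation in Remark~\ref{rem:exact_penalization} shows that $\psi(\beta)=(-\varphi)^*(-1/\beta)=0$ whenever $\beta\leq 1/c$. Consequently $\psi(\Cerr\alpha)=\psi(2\Cerr\alpha)=0$ for every admissible $\alpha\leq 1/(2\Cerr c)$, so that the bounds \eqref{eqs:error_decomp_final} reduce to the pure noise contributions. Picking any such $\alpha$ bounded away from zero (for instance $\alpha=1/(2\Cerr c)$), the terms $\varepsilon/\alpha$, $\eta^{\gamma}/\alpha^{q'}$ and $\eta^{\gamma}/\alpha^{q'-1}$ are all $O(\varepsilon+\eta^{\gamma})$, proving the stated rate.

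The main obstacle lies in Case~1, namely the step of converting the three-term bounds into the compact single-term rates $\psi(\theta^{-1}(\varepsilon)+\tilde\theta^{-1}(\eta^{\gamma}))$ and $\varepsilon+\theta(\tilde\theta^{-1}(\eta^{\gamma}))$. The non-trivial ingredient is the doubling (sub-homogeneity) estimate $\psi(c\alpha)\lesssim\psi(\alpha)$, which is precisely where the hypothesis that $\varphi^{1+\delta}$ is concave enters (this is the standard mechanism in the analysis of variational source conditions, cf.\ \cite{g10b,hy10}); once that is available, everything else is routine algebraic bookkeeping of the exponents $q'$ and $\gamma$.
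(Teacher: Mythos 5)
Your proposal is correct and follows essentially the same route as the paper: start from the bounds of Theorem~\ref{thm:impulsive_noise_conv_2}, use the defining identities $\theta(\theta^{-1}(\varepsilon))=\varepsilon$ and $\tilde\theta(\tilde\theta^{-1}(\eta^{\gamma}))=\eta^{\gamma}$ to balance the noise terms against $\psi$, invoke the sub-homogeneity $\psi(Ct)\leq C^{1/\delta}\psi(t)$ coming from concavity of $\varphi^{1+\delta}$ to absorb the constants $\Cerr$, $\underline{c}$, $\overline{c}$, and in Case~2 exploit that $\psi$ vanishes on $[0,1/c]$ (exact penalization). The only cosmetic difference is that you funnel every term through $\psi(\alpha)$ and $\theta(\alpha)$ before converting back to $\theta^{-1}(\varepsilon)+\tilde\theta^{-1}(\eta^{\gamma})$, whereas the paper bounds each noise term by $\psi(\theta^{-1}(\varepsilon))$ and $\psi(\tilde\theta^{-1}(\eta^{\gamma}))$ directly and then uses monotonicity of $\psi$; this changes nothing of substance.
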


\begin{proof}
\emph{Case 1:} A simple argument shows that the 
concavity of $\varphi^{1+\delta}$ implies 
$\psi(Ct)\leq C^{1/\delta} \psi\left(t\right)$ for all $C\geq 1$ and $t>0$ 
(see the proof of \cite[Thm.~5.1]{wh12}). 
From \eqref{eq:error_decomp_final} and the choice of $\alpha$ we obtain that
\begin{align*}
\breg{\ualdel} &\leq \frac{2q'}{\beta} \frac{\varepsilon}{\alpha} + \frac{q'-1}{\beta} \frac{\eta^{\gamma}}{\alpha^{q'}} 
+ \frac{C_{\psi}}{\beta} \psi\left(\Cerr \alpha\right)  
\\[0.1cm] & 
\leq C\left(\frac{\varepsilon}{\theta^{-1}\!\left(\varepsilon\right)} + \frac{\eta^{\gamma}}{\big(\tilde\theta^{-1}\!\left(\eta^{\gamma}\right)\big)^{q'}} + \psi\left(\underline{c} \alpha\right)\right) \\[0.1cm] 
& = C \paren{\psi\big(\theta^{-1}(\varepsilon)\big)
+ \psi\big(\tilde\theta^{-1}(\eta^{\gamma})\big)
+ \psi\left(\underline{c} \alpha\right)}  
\\[0.1cm] &
\leq 3C\psi\left(\theta^{-1}\!\left(\varepsilon\right)
+ \tilde \theta^{-1}\left(\eta^{\gamma}\right)\right)
\end{align*}
with $C:=\max\big\{\overline{c}\frac{2q'}{\beta},\overline{c}^{q'}\frac{q'-1}{\beta},
\frac{C_{\psi}}{\beta}\max\{\Cerr/\underline{c},1\}^{1/\delta}\big\}$. 
Here the equality follows from the identities 
$\varepsilon = \theta^{-1}(\varepsilon)\psi(\theta^{-1}(\varepsilon))$ 
and 
$\eta^{\gamma} = (\tilde\theta^{-1}(\eta^{\gamma}))^{q'}
\psi(\tilde\theta^{-1}(\eta^{\gamma}))$. 
Similarly it follows from \eqref{eq:error_decomp_final_Y}
and the identity 
$\tilde\theta^{-1}(\eta^{\gamma})^{q'-1} \cdot
\theta\!\paren{\tilde\theta^{-1}(\eta^{\gamma})} 
= \tilde\theta\!\paren{\tilde\theta^{-1}(\eta^{\gamma})} 
=\eta^{\gamma}$ 
that
\begin{align*}
\norm{F(\ualdel) - \gdag}{L^1\left(\manifold\right)}
&\leq 4q' \varepsilon + 2(q'-1) \frac{\eta^{\gamma}}{\alpha^{q'-1}} 
+ 2C_{\psi}\Cerr\alpha \psi\left(2 \Cerr \alpha\right) \\
&\leq \tilde{C}
\paren{\varepsilon + \frac{\eta^{\gamma}}
{\tilde\theta^{-1}\!\left(\eta^{\gamma}\right)^{q'-1}} 
+ \underline{c}\alpha \psi\left(\underline{c} \alpha\right)}\\
&= \tilde{C}\paren{\varepsilon 
+ \theta\!\paren{\tilde\theta^{-1}(\eta^{\gamma})} 
+ \theta(\underline{c}\alpha)}\\
&\leq 2\tilde{C}\paren{\varepsilon 
+ \theta\!\paren{\tilde\theta^{-1}(\eta^{\gamma})}}
\end{align*}
with $\tilde{C}:=\max\braces{4q',2(q'-1)\underline{c}^{1-q'},
2C_{\psi}\Cerr/\underline{c}\max\{2\Cerr/\underline{c},1\}^{1/\delta}}$. 

\emph{Case 2:} This follows immediately from 
Theorem~\ref{thm:impulsive_noise_conv_2} and the fact 
the $\psi(t)=0$ for $0\leq t\leq 1/c$ (see Remark \ref{rem:exact_penalization}).
\end{proof}

Note that the case distinction in 
Theorem~\ref{thm:rate_epsilon_eta} is not exhaustive: There 
are concave, almost linear index functions $\varphi$, which do 
not belong to any of the two classes and would require a 
separate discussion.

If $\varphi$ is a power function, 
the right hand side of the error bound is given more explicitly as follows: 

\begin{cor}\label{cor:power}
Suppose the assumptions of Theorem~\ref{thm:impulsive_noise_conv_2} hold true, let $\varphi$ in \eqref{eq:vie} 
be given by $\varphi \left(t\right) = c \cdot t^{\kappa}$ with $c > 0$ and $\kappa \in \left(0,1\right)$, 
and let $q=q'=2$. Then for 
$\alpha \sim \varepsilon^{1-\kappa}+ 
\eta^{\frac{1-\kappa}{2-\kappa}\gamma}$ we obtain
\begin{subequations}\label{eqs:rates_hoelder}
\begin{align}
\label{eq:rate_hoelderX}
\breg{\ualdel} &=\mathcal O \left(\varepsilon^{\kappa}+ 
\eta^{\frac{\kappa}{2-\kappa}\gamma}\right)
&&\qquad\text{as}\qquad \max\left\{\varepsilon, \eta\right\} \searrow 0, \\[0.1cm]
\label{eq:rate_hoelderY}
\norm{F\left(\ualdel\right) - \gdag}{L^1\left(\manifold\right)} &= \mathcal O \left(\varepsilon 
+\eta^{\frac{\gamma}{2-\kappa}}\right) 
&&\qquad\text{as}\qquad \max\left\{\varepsilon, \eta\right\} \searrow 0.
\end{align}
\end{subequations}
\end{cor}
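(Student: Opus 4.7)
The plan is to apply Case~1 of Theorem~\ref{thm:rate_epsilon_eta} and carry out the elementary calculation for the specific power index function $\varphi(t) = c\,t^{\kappa}$. First I would verify the hypothesis of Case~1, namely that $\varphi^{1+\delta}$ is concave for some $\delta>0$: since $\varphi^{1+\delta}(t) = c^{1+\delta}\,t^{\kappa(1+\delta)}$ and $\kappa<1$, any $\delta\in(0,(1-\kappa)/\kappa]$ yields $\kappa(1+\delta)\le 1$ and hence concavity.

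Next, by the explicit formula \eqref{eq:fenchel_conj_power_funct} for the Fenchel conjugate of a power function,
\[
\psi(\alpha) \;=\; (-\varphi)^*\!\left(-\tfrac{1}{\alpha}\right) \;=\; C\,\alpha^{\kappa/(1-\kappa)}
\]
for a positive constant $C$ depending only on $c$ and $\kappa$. With $q=q'=2$ this gives
\[
\theta(\alpha) \;=\; \alpha\,\psi(\alpha)\;\sim\; \alpha^{1/(1-\kappa)},
\qquad
\tilde\theta(\alpha) \;=\; \alpha^{2}\,\psi(\alpha)\;\sim\; \alpha^{(2-\kappa)/(1-\kappa)},
\]
and inverting yields $\theta^{-1}(\varepsilon) \sim \varepsilon^{1-\kappa}$ and $\tilde\theta^{-1}(\eta^{\gamma}) \sim \eta^{\gamma(1-\kappa)/(2-\kappa)}$. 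Summing these reproduces the prescribed choice $\alpha \sim \varepsilon^{1-\kappa} + \eta^{\gamma(1-\kappa)/(2-\kappa)}$, so the hypothesis of Case~1 on $\alpha$ is met.

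Finally, I would insert these expressions into the abstract rates supplied by Case~1. For the Bregman bound, since $\psi$ is a power function the elementary inequality $\psi(a+b)\le C'(\psi(a)+\psi(b))$ (trivial when the exponent $\kappa/(1-\kappa)$ is $\le 1$, and from $(a+b)^p\le 2^{p-1}(a^p+b^p)$ otherwise) gives
\[
\psi\bigl(\theta^{-1}(\varepsilon) + \tilde\theta^{-1}(\eta^{\gamma})\bigr)
\;=\; \mathcal O\bigl(\varepsilon^{\kappa} + \eta^{\gamma\kappa/(2-\kappa)}\bigr),
\]
which is \eqref{eq:rate_hoelderX}. For the residual bound, the identity $\theta\bigl(\tilde\theta^{-1}(\eta^{\gamma})\bigr) \sim \eta^{\gamma/(2-\kappa)}$, combined with the formula for $\norm{F(\ualdel)-\gdag}{L^1(\manifold)}$ in Case~1, immediately yields \eqref{eq:rate_hoelderY}. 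The entire argument amounts to careful bookkeeping of exponents, so I do not expect any substantive obstacle beyond the already-established Theorem~\ref{thm:rate_epsilon_eta}; the only mild subtlety is the routine power-function estimate on $\psi$ of a sum.
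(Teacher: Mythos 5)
Your proposal is correct and follows essentially the same route as the paper: the paper's proof is precisely the observation that $\psi(t)=C\,t^{\kappa/(1-\kappa)}$, $\theta(\alpha)=C\,\alpha^{1/(1-\kappa)}$, and $\tilde\theta(\alpha)=C\,\alpha^{(2-\kappa)/(1-\kappa)}$, after which Case~1 of Theorem~\ref{thm:rate_epsilon_eta} yields the stated rates. Your additional checks (concavity of $\varphi^{1+\delta}$ and the power-sum estimate for $\psi$) are sound and merely make explicit what the paper leaves implicit.
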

\begin{proof}
This follows from $\psi\left(t\right) = C \cdot t^{\frac{\kappa}{1-\kappa}}$, 
$\theta \left(\alpha\right) = C\cdot\alpha^{\frac{1}{1-\kappa}}$, and
$\tilde \theta \left(\alpha\right)= C\cdot\alpha^{\frac{2-\kappa}{1-\kappa}}$ with 
$C:=c^{\frac{1}{1-\kappa}}
\paren{\kappa^{\frac{\kappa}{1-\kappa}}
-\kappa^{\frac{1}{1-\kappa}}}$.
\end{proof}

\subsection{functional dependence of $\varepsilon$ and $\eta$}\label{sec:varepsilon}
As the choices of $\varepsilon \geq 0$ and $\eta \geq 0$ are not independent of  each other, 
let us study the function 
\begin{equation}\label{eq:epsi_eta}
\varepsilon_{\xi}\left(\eta\right):=\inf\left\{\norm{\xi}{L^1\left(\mi\right)} ~\big|~\mc\in \Borel(\manifold), |\mc|\leq \eta\right\}\,.
\end{equation}

\begin{prop}\label{prop:varepsilon}
If $\xi \in L^1 \left(\manifold\right)$, the function $\varepsilon_{\xi}$ has the following properties:
\begin{enumerate}
\item $\varepsilon_{\xi}$ is continuous, decreasing, and convex. 
\item $\varepsilon_{\xi}(0)=\norm{\xi}{L^1\left(\manifold\right)}$, $\varepsilon_{\xi}(\meas{\manifold})=0$, 
and $\varepsilon_{\xi}$ is affine linear on $[0,\meas{\manifold}]$ if and only if 
$\left|\xi\right|$ is constant.
\item If $\xi\in L^{\infty}(\manifold)$, then
$
\lim_{t\searrow 0}\frac{1}{t}(\varepsilon_{\xi}(t)-\varepsilon_{\xi}(0))= -\norm{\xi}{L^{\infty}}$\,.
\end{enumerate}
\end{prop}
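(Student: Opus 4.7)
The plan is to reduce everything to an explicit integral representation of $\varepsilon_\xi$ via the non-increasing rearrangement $\xi^*$ of $|\xi|$, after which all three assertions become routine. Recall that $\xi^*:[0,\meas{\manifold}]\to[0,\infty)$ is the unique non-increasing, right-continuous function having the same distribution function as $|\xi|$. The central claim, which I would establish first, is
\[
\varepsilon_\xi(\eta) \;=\; \int_{\eta}^{\meas{\manifold}}\xi^*(t)\,\mathrm d t \qquad \text{for all } \eta\in[0,\meas{\manifold}].
\]
The inequality ``$\geq$'' is the Hardy--Littlewood rearrangement inequality: for any $\mc\in\Borel(\manifold)$ with $|\mc|\leq\eta$ one has $\int_{\mc}|\xi|\,\mathrm d x\leq\int_0^{|\mc|}\xi^*(t)\,\mathrm d t\leq\int_0^{\eta}\xi^*(t)\,\mathrm d t$, hence $\norm{\xi}{L^1(\mi)}=\norm{\xi}{L^1(\manifold)}-\int_{\mc}|\xi|\,\mathrm d x\geq\int_{\eta}^{\meas{\manifold}}\xi^*(t)\,\mathrm d t$. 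The reverse inequality is realised by taking $\mc_\eta$ to be the union of the super-level set $A_\eta:=\{x\in\manifold:|\xi(x)|>\xi^*(\eta)\}$ and a measurable subset of the level set $\{x\in\manifold:|\xi(x)|=\xi^*(\eta)\}$ of measure $\eta-|A_\eta|$ (which exists by the very definition of $\xi^*$); the layer-cake formula then gives $\int_{\mc_\eta}|\xi|\,\mathrm d x=\int_0^{\eta}\xi^*(t)\,\mathrm d t$, realising the infimum.

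With this representation in hand, the claims of (i) and (ii) follow directly from standard properties of $\xi^*$. Continuity of $\varepsilon_\xi$ is automatic since $\xi^*\in L^1([0,\meas{\manifold}])$ and the integral depends continuously on its lower limit; monotonicity follows from $\xi^*\geq 0$; and convexity follows because the one-sided derivative $-\xi^*(\eta)$ is non-decreasing in $\eta$, as $\xi^*$ is non-increasing. For (ii), the identity $\varepsilon_\xi(0)=\int_0^{\meas{\manifold}}\xi^*(t)\,\mathrm d t=\norm{\xi}{L^1(\manifold)}$ is the equidistribution of $\xi^*$ and $|\xi|$, $\varepsilon_\xi(\meas{\manifold})=0$ is immediate, and $\varepsilon_\xi$ is affine linear on $[0,\meas{\manifold}]$ if and only if $\xi^*$ is constant on this interval, which by equidistribution is equivalent to $|\xi|$ being almost everywhere constant on $\manifold$.

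For (iii) the $L^\infty$ hypothesis yields $\xi^*(0)=\esssup|\xi|=\norm{\xi}{L^\infty}$ together with right-continuity of $\xi^*$ at $0$; consequently
\[
\frac{\varepsilon_\xi(t)-\varepsilon_\xi(0)}{t} \;=\; -\frac{1}{t}\int_0^t\xi^*(s)\,\mathrm d s \;\longrightarrow\; -\xi^*(0^+) \;=\; -\norm{\xi}{L^\infty}
\]
as $t\searrow 0$, which is the claimed derivative formula. The one genuinely delicate point is the attainment argument in the integral representation, in particular the measurable selection of a subset of the level set $\{|\xi|=\xi^*(\eta)\}$ of prescribed measure when $\xi^*$ has a plateau at $\eta$; everything else is a routine consequence of rearrangement theory.
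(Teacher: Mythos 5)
Your proof is correct, and at its core it makes the same identification as the paper, but it packages the argument through standard rearrangement theory rather than a hands-on construction. The paper builds a nested family of super-level sets $\mc_\eta$ with $\meas{\mc_\eta}=\eta$ together with the decreasing, right-continuous function $a(\eta)=\max\{\lambda\ge 0:\meas{\{|\xi|\ge\lambda\}}\ge\eta\}$, shows the infimum in \eqref{eq:epsi_eta} is attained at $\mc_\eta$, and then extracts the one-sided derivative identity $\lim_{\tau\searrow0}\tau^{-1}\bigl(\varepsilon_\xi(\eta+\tau)-\varepsilon_\xi(\eta)\bigr)=-a(\eta)$ from a two-sided difference-quotient sandwich; all three claims are read off from that. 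Your $\xi^*$ is exactly the paper's $a$, and your closed-form representation $\varepsilon_\xi(\eta)=\int_\eta^{\meas{\manifold}}\xi^*(t)\,\mathrm dt$ is the integrated form of the paper's derivative identity. What your route buys is economy: the Hardy--Littlewood inequality replaces the paper's optimality argument for level sets, and the tail integral makes continuity, convexity, the affine-linearity criterion and the tangent slope $-\xi^*(0^+)=-\norm{\xi}{L^\infty}$ at $0$ immediate, with no difference-quotient bookkeeping. What the paper's construction buys is self-containedness, in particular an explicit resolution of the one point you flag as delicate -- selecting a measurable subset of the plateau $\{|\xi|=\xi^*(\eta)\}$ of prescribed measure -- which it handles by intersecting that level set with balls $B(0,r)$ and choosing $r$ by continuity of $r\mapsto\meas{C\cap B(0,r)}$. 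Your appeal to non-atomicity of Lebesgue measure covers the same point, so there is no gap; just be aware that this selection (and the standard facts $\meas{\{|\xi|>\xi^*(\eta)\}}\le\eta\le\meas{\{|\xi|\ge\xi^*(\eta)\}}$ you use to justify it) is exactly where the real work of the paper's part (i) lives.
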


\begin{proof}
(i) \emph{In the first part of the proof we show that there exists a familiy of Borel sets 
$\{\mc_{\eta}\in\Borel(\manifold):\eta\in [0,\meas{\manifold}]\}$ and a decreasing function 
$a:[0,\meas{\manifold}]\to [0,\esssup|\xi|]$, which is continuous from the right, i.e.\ 
$\lim_{\eta\searrow \eta_0}a(\eta) = a(\eta_0)$ for all 
$\eta_0\in [0,\meas{\manifold})$, such that }
\begin{align}\label{eq:constr_Peta}
\begin{aligned}
&|\xi|\geq a(\eta) && \mbox{a.e.\ on } \mc_\eta,\qquad 
&|\xi|\leq a(\eta) && \mbox{a.e.\ on } \mi_\eta,\\
&\mc_{\underline{\eta}}\subset \mc_{\overline{\eta}} &&\mbox{if } \underline{\eta}\leq \overline{\eta}, \qquad &\meas{\mc_\eta} = \eta\,.&& 
\end{aligned}
\end{align}
To construct $\{\mc_\eta\}$ and $a$, define 
\(
b(\lambda):= \meas{\braces{x\in\manifold:|\xi(x)|\geq \lambda}}
\) 
for $\lambda\in [0,\esssup|\xi|]$. Then $b$ is decreasing, and 
a straightforward application of Lebesgue's Dominated Convergence Theorem shows that  
\begin{subequations}
\begin{align}
\label{eq:b_cont_left}
&\lim_{\tilde{\lambda}\nearrow \lambda}b(\tilde{\lambda}) = b(\lambda),\\
&\lim_{\tilde{\lambda}\searrow \lambda}b(\lambda) - b(\tilde{\lambda})
=\meas{C_{\lambda}} \qquad\mbox{with }
C_{\lambda}:=\braces{x\in\manifold:|\xi(x)|=\lambda}
\end{align}
\end{subequations}
for all $\lambda\in  (0,\esssup|\xi|)$. 
We define $a(\eta):=\max\{\lambda\geq 0:b(\lambda)\geq \eta\}$ (see Figure \ref{fig:epsilon} for a sketch of $a$ and $b$). Since $b$ is decreasing 
and continuous from the left, $a$ is decreasing and continuous from the right. 
We have  $b(a(\eta))= \lim_{\tilde{\lambda}\nearrow a(\eta)}b(\tilde{\lambda})$, and if
$b(a(\eta))= \eta$, then $a(\eta-t)=a(\eta)$ for all $0\leq t\leq \meas{C_{a(\eta)}}$. 
If $b$ is continuous at $a(\eta)$, we set $\mc_{\eta}:=\{x\in\manifold:|\xi(x)|\geq a(\eta)\}$.
Otherwise, choose $\eta$ with $b(a(\eta))= \eta$, 
let $\tilde{\mc}_\eta:=\{x\in\manifold:|\xi(x)|>a(\eta)\}$ 
and define $\mc_{\eta-t}:= \tilde{\mc}_{\eta}\cup (C_{\eta}\cap B(0,r(\eta,t)))$ where
$r(\eta,t)>0$ is chosen such that $\meas{\mc_{\eta-t}}=\eta-t$ for all 
$0\leq t\leq \meas{C_{a(\eta)}}$. It is easy to see that \eqref{eq:constr_Peta} is satisfied. 

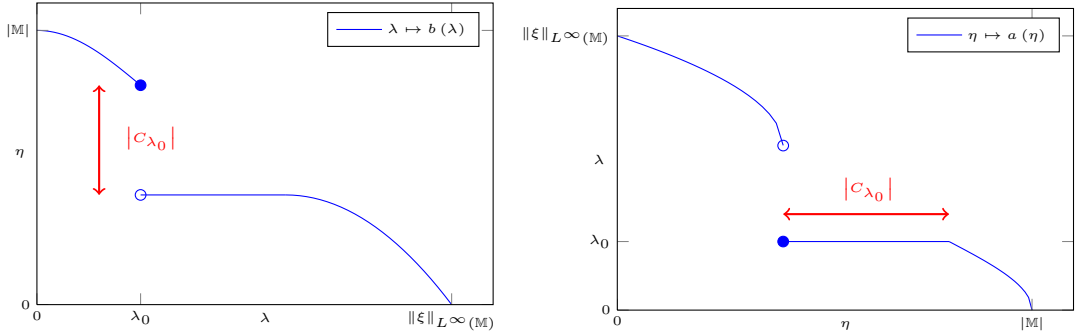
\begin{figure}[!htb]
\setlength\fheight{4cm} \setlength\fwidth{6cm}
\centering
\tiny
\begin{tikzpicture}
\begin{axis}[%
scale only axis,
xlabel = $\lambda$,
ylabel = $\eta$,
xlabel shift = -11pt,
ylabel shift = -12pt,
ylabel style = {rotate=270},
width=\fwidth,
height=\fheight,
xmin=0, xmax=1.1,
ymin=0, ymax=1.1,
xtick = {0,0.25,1},
xticklabels = {0,$\lambda_0$,$\norm{\xi}{L^\infty \left(\manifold\right)}$},
ytick = {0,1},
yticklabels = {0,$\meas{\manifold}$},
axis on top,
legend entries= {$\lambda \mapsto b\left(\lambda\right)$},
legend pos = north east]
\addplot [
color=blue,
solid,
domain = 0:0.25
]
{0.2*cos(deg(2*pi*x))+0.8};
\addplot [
color=blue,
mark = *
]
coordinates{(0.25,0.8)};
\addplot [
arrows = <->,
color=red,
thick,
solid
]
coordinates{(0.15,0.4)(0.15,0.8)};
\pgfplotsset{every axis/.append style={
extra description/.code={
\node at (0.25,0.55) {\textcolor{red}{$\meas{C_{\lambda_0}}$}};
}}}
\addplot [
color=blue,
mark = o
]
coordinates{(0.25,0.4)};
\addplot [
color=blue,
solid
]
coordinates{(0.25,0.4)(0.6,0.4)};
\addplot [
color=blue,
solid,
domain = 0.6:1
]
{-((x-0.6)/0.4*0.6325)^2+0.4};
\end{axis}
\end{tikzpicture}
\begin{tikzpicture}
\begin{axis}[%
scale only axis,
xlabel = $\eta$,
ylabel = $\lambda$,
xlabel shift = -8pt,
ylabel shift = -36pt,
ylabel style = {rotate=270},
width=\fwidth,
height=\fheight,
xmin=0, xmax=1.1,
ymin=0, ymax=1.1,
ytick = {0,0.25,1},
yticklabels = {0,$\lambda_0$,$\norm{\xi}{L^\infty \left(\manifold\right)}$},
xtick = {0,1},
xticklabels = {0,$\meas{\manifold}$},
axis on top,
legend entries= {$\eta \mapsto a\left(\eta\right)$},
legend pos = north east]]
\addplot [
color=blue,
solid,
domain = 0:0.4
]
{0.4/0.6325*sqrt(0.4-x)+0.6};
\addplot [
color=blue,
mark = o
]
coordinates{(0.4,0.6)};
\addplot [
arrows = <->,
color=red,
thick,
solid
]
coordinates{(0.4,0.35)(0.8,0.35)};
\pgfplotsset{every axis/.append style={
extra description/.code={
\node at (0.55,0.4) {\textcolor{red}{$\meas{C_{\lambda_0}}$}};
}}}
\addplot [
color=blue,
mark = *
]
coordinates{(0.4,0.25)};
\addplot [
color=blue,
solid
]
coordinates{(0.4,0.25)(0.8,0.25)};
\addplot [
color=blue,
solid,
domain = 0.8:1
]
{rad(acos(5*x-4))/2/pi};
\end{axis}
\end{tikzpicture}
\caption{Illustration of the functions $a$ and $b$ in the proof of Proposition~\ref{prop:varepsilon}.}
\label{fig:epsilon}
\end{figure}

(ii) \emph{relations between $\varepsilon_{\xi}$ and $\{\mc_{\eta}\}$, $a$:}
From \eqref{eq:constr_Peta} we obtain 
\begin{equation}\label{eq:char_varepsilon}
\varepsilon_{\xi}(\eta) = \norm{\xi}{L^1(\mi_{\eta})},\qquad \eta\in [0,\meas{\manifold}].
\end{equation}
For $0\leq \underline{\eta}\leq \overline{\eta}\leq\meas{\manifold}$ note that 
$\varepsilon_{\xi}(\underline{\eta})-\varepsilon_{\xi}(\overline{\eta})
= \norm{\xi}{L^1(\mc_{\overline{\eta}}\setminus \mc_{\underline{\eta}})}$. 
Due to \eqref{eq:constr_Peta} we obtain
\begin{equation}\label{eq:diffq_varepsilon}
a(\overline{\eta})\paren{\overline{\eta}-\underline{\eta}}
= a(\overline{\eta})\meas{\mc_{\overline{\eta}}\setminus \mc_{\underline{\eta}}}
\leq \varepsilon_{\xi}(\underline{\eta})-\varepsilon_{\xi}(\overline{\eta})
\leq  a(\underline{\eta})\meas{\mc_{\overline{\eta}}\setminus \mc_{\underline{\eta}}}
= a(\underline{\eta})\paren{\overline{\eta}-\underline{\eta}}\,.
\end{equation}
As $a$ is continuous from the right, it follows the $-a$ is the right-sided derivative of 
$\varepsilon_{\xi}$:
\begin{equation}\label{eq:deriv_varepsilon}
\lim_{\tau\searrow 0}\frac{\varepsilon_{\xi}(\eta+\tau)-\varepsilon_{\xi}(\eta)}{\tau} = -a(\eta)\,.
\end{equation}
(iii) \emph{Proof of the claims:}
\begin{enumerate}
\item Obviously, $\varepsilon_{\xi}$ is decreasing. Convexity follows from \eqref{eq:deriv_varepsilon}
and the fact that $a$ is decreasing. 
Continuity  follows from convexity and finiteness 
(see \cite[Cpt.~1, Cor.~2.3]{et76}).
\item The values of $\varepsilon_{\xi}$ at $0$ and $\meas{\manifold}$ follow immediately from 
the definition. It follows from \eqref{eq:deriv_varepsilon} that $\varepsilon_{\xi}$ is affine linear 
if and only if $a$ is constant, which in turn is equivalent that $|\xi|$ is constant. 
\item
This follows from \eqref{eq:deriv_varepsilon} and $a(0)=\norm{\xi}{L^\infty\left(\manifold\right)}$. 
\end{enumerate} 
\end{proof}

The more the graph of $\varepsilon_{\xi}$ looks like the letter 'L' or the faster $\varepsilon_{\xi}$ decays 
at $0$,  ``the more impulsive'' the noise $\xi$. Examples are shown in Figure \ref{fig:impulsive_noise}.

\begin{figure}[!htb]
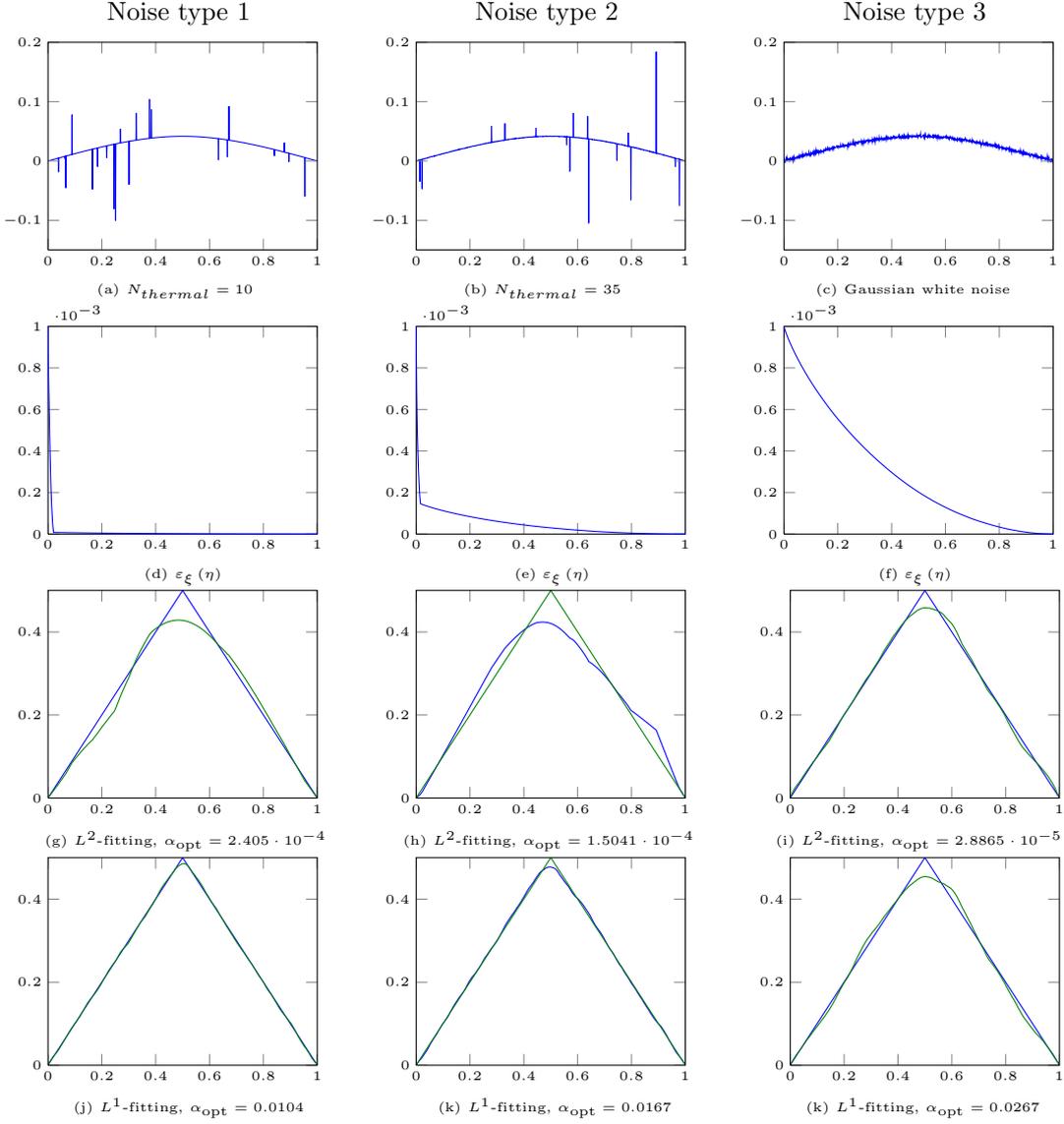

\setlength\fheight{2.8cm} \setlength\fwidth{3.6cm}
\centering
\tiny
\begin{tabular}{rrr}
\normalsize Noise type 1 \rule{1cm}{0pt} & \normalsize Noise type 2 \rule{1cm}{0pt} & \normalsize Noise type 3 \rule{1cm}{0pt} \\[0.1cm]
\input{impulsive_noise_1.tikz} & \input{impulsive_noise_2.tikz} & \input{impulsive_noise_3.tikz} \\[0.1cm]
(a)~$N_{thermal} = 10$ \rule{1cm}{0pt} & (b)~$N_{thermal} = 35$ \rule{1cm}{0pt} & (c)~Gaussian white noise\rule{.8cm}{0pt} \\[0.1cm]
\input{epsilon_eta_1.tikz} & \input{epsilon_eta_2.tikz} & \input{epsilon_eta_3.tikz} \\[0.1cm]
(d)~$\varepsilon_{\xi} \left(\eta\right)$\rule{1.5cm}{0pt} & (e)~$\varepsilon_{\xi} \left(\eta\right)$\rule{1.5cm}{0pt} & (f)~$\varepsilon_{\xi} \left(\eta\right)$ \rule{1.5cm}{0pt}\\[0.1cm]
\input{L2_rec_1.tikz} & \input{L2_rec_2.tikz} & \input{L2_rec_3.tikz}\\[0.1cm]
(g)~$L^2$-fitting, $\alpha_{\rm opt} = 2.405\cdot10^{-4}$\rule{.1cm}{0pt} & (h)~$L^2$-fitting, $\alpha_{\rm opt} =1.5041\cdot10^{-4} $\rule{.1cm}{0pt} &  (i)~$L^2$-fitting, $\alpha_{\rm opt} = 2.8865\cdot10^{-5}$\rule{.1cm}{0pt} \\[0.1cm]
\input{L1_rec_1.tikz} & \input{L1_rec_2.tikz} & \input{L1_rec_3.tikz}\\[0.1cm]
(j)~$L^1$-fitting, $\alpha_{\rm opt} = 0.0104$\rule{.4cm}{0pt}& (k)~$L^1$-fitting, $\alpha_{\rm opt} = 0.0167$\rule{.4cm}{0pt} &  (k)~$L^1$-fitting, $\alpha_{\rm opt} = 0.0267$\rule{.4cm}{0pt} \\[0.1cm]
\end{tabular}
\caption{For noise models 1 and 2 we choose $\xi = 0.001 \cdot \left(\xi_1 - \xi_2\right) / \norm{\xi_1 -\xi_2}{L^1 \left(\left[0,1\right]\right)}$ with $\xi_1, \xi_2$ simulated as proposed in \cite{ts10} for parameters $\beta = 100$, $A = 80$, $B = 7.5$, $f = 1000$ and $T =1$. The noise model 3 corresponds to similarly normed Gaussian white noise.}
\label{fig:impulsive_noise}
\end{figure}

\subsection{convergence rates in terms of an optimal $\eta$}

Substituting $\varepsilon$ by $\varepsilon_{\xi}(\eta)$ in 
\eqref{eq:rate_hoelderX} yields the error bound
\begin{equation}\label{eq:err_bound_infeta}
\breg{\ualdel} \leq C\inf_{0\leq \eta\leq |\manifold|}
\left[\varepsilon_{\xi}(\eta)^\kappa
+ \eta^{\frac{\kappa}{2-\kappa}\gamma}\right]
\end{equation}
for some constant $C>0$ and an optimal choice of $\alpha$.  Usually the function $\varepsilon_{\xi}$ will not
be known precisely since since $\xi$ in unknown, but in some situations 
an upper bound on $\varepsilon_{\xi}$ may be known. E.g.,\ if $\xi$ is 
the realization a random process with known distribution, we may be 
able to compute upper bounds on $\varepsilon_{\xi} $ with high probability. 
 
By Proposition~\ref{prop:varepsilon} the first term in 
the argument of the infimum is decreasing in $\eta$ 
whereas the second argument in increasing. 
By continuity and the monotonicity properties there exists 
some $\bar\eta$ such that 
\[
\varepsilon_{\xi} \left(\bar\eta\right) = \bar\eta^{\frac{\gamma}{2-\kappa}}.
\]
It is easy to see that 
$\varepsilon_{\xi}(\bar\eta)^{\kappa}
\leq \inf_{0\leq \eta\leq |\manifold|}
\left[\varepsilon_{\xi}(\eta)^\kappa+ \eta^{\frac{\kappa}{2-\kappa}\gamma}\right]
\leq 2\varepsilon_{\xi}(\bar\eta)^{\kappa}$ and hence 
\[
\breg{\ualdel} 
=2C \varepsilon_\xi\left(\bar\eta\right)^{\kappa}.  
\]
The standard error analysis would yield the convergence rate
\[
\breg{\ualdel} =\tilde{C}\norm{\xi}{L^1\left(\manifold\right)}^\kappa 
= \tilde{C} \varepsilon_\xi\left(0\right)^\kappa
\]
(see \eqref{eq:standard_err_bound}). Thus our analysis improves the 
known error bounds roughly by the factor 
\begin{equation}\label{eq:improvement}
\paren{\frac{\varepsilon_\xi\left(0\right)}{\varepsilon_\xi
\left(\bar\eta\right)}}^{\kappa}.
\end{equation}
Recall that for impulsive noise the graph of $\varepsilon_{\xi}$ is L-shaped, and thus $\bar\eta$ will be close to the corner 
of the L in such a case. 
Therefore the factor \eqref{eq:improvement} will 
be the larger the larger the impulsiveness of 
the noise. This is a heuristic argument that the improvement 
factor in \eqref{eq:improvement} may become arbitrarily large. Note that the convergence
rate of the residuals $\norm{F\left(\ualdel\right) - \gdag}{L^1\left(\manifold\right)}$ can
similarly be calculated as $\varepsilon_\xi\left(0\right)$ and $\varepsilon_\xi
\left(\bar\eta\right)$ respectively, so the corresponding impovement factor is just the $\kappa$-th root of \eqref{eq:improvement}.

The following example shows in fact an arbirary large improvement:

\begin{ex}[purely impulsive noise]\label{ex:pureIN}
Now let us investigate the case of purely impulsive noise close to 
Example~\ref{ex:delta_peaks}. By purely impulsive noise we mean 
a noise vector $\xi\in L^1\left(\manifold\right)$ that consists only of 
``impulses'', or more
precisely a noise vector $\xi$ for which we can also choose $\varepsilon=0$ 
and $\eta$ small in \eqref{eq:noise_model_1} (but not $\eta=0$ as
in Example~\ref{ex:delta_peaks}). More precisely, we consider a noise 
vector $\xi$ is such that
\begin{equation}\label{eq:xi_extreme}
\exists~\mc \in \Borel\left(\manifold\right): \qquad \meas{\mc} = \eta_0, 
\qquad \xi_{|_{\mc}} = \frac{s(\eta_0)}{\eta_0}, \qquad \xi_{|_{\mi}} = 0
\end{equation}
with some $\eta_0 > 0$ and some scaling factor $s(\eta_0)>0$ which may 
be chosen arbitrarily. Then one readily computes
\[
\varepsilon_{\xi} \left(\eta\right) = \begin{cases} s(\eta_0) \left(1-\frac{\eta}{\eta_0}\right) & \text{if } \eta < \eta_0, \\[0.1cm] 0 &\text{else.} \end{cases}
\]
Setting $\eta=\eta_0$ and $\eta=0$ in \eqref{eq:err_bound_infeta} we find
\[
\breg{\ualdel} \leq C \min\braces{\eta_0^{\frac{\kappa}{2-\kappa}\gamma},s(\eta_0)^\kappa}
\]
for an optimal $\alpha$. (Setting $\lambda:=\eta/\eta_0$ we see that other choices of $\eta\in [0,\eta_0]$ can 
improve the constant $C$ at most by the factor 
$\min_{0\leq\lambda\leq 1}\max\{(1-\lambda)^\kappa,\lambda^{\frac{\kappa}{2-\kappa}\gamma}\}$.) 
For a comparison let us calculate the corresponding noise levels 
$\norm{\xi}{L^1\left(\manifold\right)}$ and 
$\norm{\xi}{L^2 \left(\manifold\right)}$ in this situation:
\begin{subequations}
\begin{align}
\left\Vert \xi\right\Vert_{L^2 \left(\manifold\right)}^2 
= \left\Vert \xi\right\Vert_{L^2 \left(\mi\right)}^2
+  \left\Vert \xi\right\Vert_{L^2 \left(\mc\right)}^2 
= \meas{\mc} \frac{s(\eta_0)^2}{\eta_0^2}
&= \frac{s(\eta_0)^2}{\eta_0}, \\[0.1cm]
\left\Vert \xi\right\Vert_{L^1 \left(\manifold\right)} 
=\left\Vert \xi\right\Vert_{L^1 \left(\mi\right)}+  \left\Vert \xi\right\Vert_{L^1 \left(\mc\right)} 
=\meas{\mc} \frac{s(\eta_0)}{\eta_0} &=s(\eta_0).
\end{align}
\end{subequations}
We assume that $\udag$ satisfies the variational source condition \eqref{eq:vie} with index function 
$\varphi(t)=c\cdot t^{\kappa}$ for $\Y= L^1(\manifold)$ and $r=1$ or $\varphi(t) = \tilde{c}\cdot t^{\tilde{\kappa}}$ 
for $\Y=L^2(\manifold)$ and $r=2$. Moreover, we suppose that Assumptions \ref{ass:wellposed} and \ref{ass:T} hold true. 
In Table \ref{tab:rates} we collect both the standard  error bounds in Theorem \ref{thm:conv} with 
$\err\equiv 2\|\xi\|_{L^2(\manifold)}^2$ and $\err\equiv 2\|\xi\|_{L^1(\manifold)}$, rsp., and our new error bounds.

\begin{table}[!htb]
\footnotesize
\centering
\begin{tabular}{l|c|cc}
\toprule[2pt]
& $L^2$--Tikhonov regularization & \multicolumn{2}{c}{$L^1$--Tikhonov regularization} \\[0.1cm]
 error analysis & standard& standard & new \\[0.1cm]
\midrule[1pt]
$\frac{1}{2}\err\left(F\left(\ualdel\right)\right)\leq$ 
& $\norm{\xi}{L^2}^2=\frac{s(\eta_0)^2}{\eta_0}$ 
& $\norm{\xi}{L^1}=s(\eta_0)$
& $\mathcal O \left(\frac{\eta_0^{2\gamma}}{\alpha} 
+ \eta_0^{\gamma}\alpha^{\frac{\kappa}{2-2\kappa}}\right)$ \\[0.1cm]
$\breg{\ualdel}=$ 
& $\mathcal O \left(\frac{s(\eta_0)^{2\tilde{\kappa}}}{\eta_0^{\tilde{\kappa}}}\right)$ 
& $\mathcal O \left(s(\eta_0)^\kappa\right)$ 
& $\mathcal O \left(\min\braces{\eta_0^{\frac{\kappa\gamma}{2-\kappa}},s(\eta_0)^\kappa}\right)$ \\[0.1cm]
$\|F(\ualdel)-\gdag)\|_{L^1}=$ 
& $\mathcal O \left(\frac{s(\eta_0)}{\sqrt{\eta_0}}\right)$ 
& $\mathcal O \left(s(\eta_0)\right)$ 
& $\mathcal O \left(\min\braces{\eta_0^{\frac{\gamma}{2-\kappa}},s(\eta_0)}\right)$\\[0.1cm]
\bottomrule[2pt]
\end{tabular}
\caption{\label{tab:rates} Comparison of error bounds 
with noise vector $\xi$ as in \eqref{eq:xi_extreme}, see Example \ref{ex:pureIN}. 
Our new error bounds for $L^1$--Tikhonov regularization (with only the first terms 
in the $\min$) depend only on the size $\eta_0$ of the corrupted area, 
but not on the arbitrary scaling factor $s(\eta_0)$ bounding the 
noise in the corrupted area.} 
\end{table}

The standard error bounds for $L^2$-- and $L^1$--Tikhonov regularization are not immediately
comparable since the index functions in the source conditions may be different. Nevertheless, 
the standard error analysis shows an improvement of $L^1$-- over $L^2$--Tikhonov regularization 
in the sense that $s(\eta_0)/\sqrt{\eta_0}$ may explode as $\eta\to 0$ whereas 
$s(\eta_0)$ tends to $0$.
To measure the improvement of our new analysis over the standard analysis we can use the factor
$s(\eta_0)^\kappa / \min\braces{\eta_0^{\frac{\kappa\gamma}{2-\kappa}},s(\eta_0)^\kappa}$ which is an
analog to \eqref{eq:improvement}. Note that $\bar \eta$ and hence the value of \eqref{eq:improvement}
cannot be calculated in general. If we consider e.g. the case $s(\eta_0) = 1$ then the aformentioned
factor is given by $1 / \eta_0^{\frac{\kappa\gamma}{2-\kappa}} \to \infty$ as $\eta_0 \searrow 0$. 
This shows again that our new error bounds may improve the standard error bounds by an arbitrarily large factor. 
\end{ex}

\section{Numerical simulations}\label{sec:num}

In this section we compare the error bounds in Theorem~\ref{thm:impulsive_noise_conv_2} with  errors in numerical simulations. 
As an example we consider $\manifold =\left[0,1\right]$ and the linear integral operator 
$T : L^2 \left(\manifold\right) \to L^2 \left(\manifold\right)$ defined by
\begin{equation}\label{eq:int_op}
\left(Tf\right) \left(x\right) = \int\limits_0^1 k\left(x,y\right) f\left(y\right) \,\mathrm d y, \qquad x \in \manifold
\end{equation}
with kernel $k\left(x,y\right) = \min\left\{x\cdot\left(1-y\right), y \cdot \left(1-x\right)\right\}, x,y \in \manifold$. 
It is easy to see that $\left(Tf\right)'' = -f$ for all 
$f \in L^2 \left(\manifold\right)$. Moreover, $T$ satisfies 
Assumption~\ref{ass:T} with $k = 2, p = 2$ and $q = 2$, so 
$\gamma = 2k/d+2(p-1)/p =5$.
We discretized $T$ by choosing equidistant points 
$x_1 = \frac{1}{2n}, x_2 = \frac{3}{2n}, \dots, x_n = \frac{2n-1}{2n}$ and using the composite midpoint rule
\[
\left(Tf\right)\left(x\right) = \int\limits_0^1 k\left(x,y\right) f\left(y\right) \,\mathrm d y \approx \frac{1}{n} \sum\limits_{i=1}^n k\left(x, x_i\right) f\left(x_i\right)
\]
on the grid points $x = x_j$, $1 \leq j \leq n$. To avoid an inverse crime, the exact data $\gdag$ has always been calculated analytically.

For the implementation of the Tikhonov regularization \eqref{eq:tik_gen} with $L^1$ data fidelity term and  penalty 
$\R\left(\sol\right) = \frac12\norm{\sol}{L^2 \left(\manifold\right)}^2$ we use Fenchel duality as proposed in \cite{cjk10b}. 
Some calculations show that the Fenchel conjugates 
of $G\left(g\right) := \norm{g-\gobs}{L^1 \left(\manifold\right)}$ and 
$\R\left(\sol\right) = \frac12 \norm{\sol}{L^2 \left(\manifold\right)}^2$ 
are given by
\begin{align*}
G^* \left(p\right)&= \begin{cases} \left\langle p, \gobs\right\rangle & \text{if } \norm{p}{L^\infty \left(\manifold\right)} \leq 1, \\[0.1cm]
\infty & \text{else} \end{cases} \\[0.1cm]
\R^* \left(q\right) &=\frac12\norm{q}{L^2 \left(\manifold\right)}^2. 
\end{align*}
(see e.g.\ \cite[Cpt.~1, Def.~4.1]{et76})
Thus the dual problem (see e.g.\ \cite[Cpt.~3]{et76}) is in this case given by
\begin{equation}\label{eq:l1_tik_dual}
\paldel \in \argmax\limits_{\norm{p}{L^\infty \left(\manifold\right)} \leq \frac1\alpha} \left[-\frac12 \left\Vert T^*p \right\Vert_{L^2 \left(\manifold\right)}^2+\left\langle p, \gobs\right\rangle \right].
\end{equation}
The discretized version of this problem was solved by Matlab's \verb+quadprog+ routine. Finally, we calculated $\ualdel$ using the extremal relation $\ualdel = T^* \paldel$ (see e.g.\ \cite[Cpt.~3, Prop. 2.4]{et76} and note that 
$\partial \R \left(\sol\right) = \{\sol\}$).

We compared this to standard $L^2$--Tikhonov regularization
\begin{equation}\label{eq:tik_L2}
\ualdel \in \argmin\limits_{\sol \in L^2 \left(\manifold\right)} \left[\frac{1}{2\alpha}\norm{T\sol - \gobs}{L^2\left(\manifold\right)}^2 + \frac12\norm{\sol}{L^2 \left(\manifold\right)}^2\right]
= \{ \left(T^*T + \alpha I\right)^{-1} T^* \gobs\}
\end{equation}
for noise of different degrees of impulsiveness (see Figure \ref{fig:impulsive_noise}). The $L^1$ reconstructions 
are significantly more accurate than $L^2$ reconstructions for impulsive noise vectors whereas for white noise 
the $L^2$ reconstruction is slightly more accurate. 

Figure \ref{fig:conv_rates} shows rates of convergence for $L^1$ data fitting with two specific choices of $\udag$ having different degrees of smoothness. The degree of smoothness of $\udag$ in terms of the operator $T$ is shown by means of 
the index function $\varphi$ in \eqref{eq:vie}, which has been estimated 
by evaluating the approximation error 
$\left(-\varphi\right)^* \left(-\frac{1}{\alpha}\right)$ 
in \eqref{eq:error_decomp_breg} for many values of $\alpha$ and a numerical 
evaluation of the Fenchel transform. 

For the computations we generated impulsive noise close to \eqref{eq:xi_extreme} with $s\left(\eta_0\right) = 1$ (we also performed experiments with larger values of $s\left(\eta_0\right)$ which yielded almost identical results). To generate the noise vectors we randomly selected $\lceil\eta_0 \cdot n\rceil$ grid points which then form the set $\mc$, and afterwards set $\xi_{|_\mc}  = \pm 1/\eta_0$ with probability $\frac12$ respectively for each $x_i \in \mc$ in the manner of salt-and-pepper noise. 

For prechosen noise parameters $\eta_0^i = (4/5)^{i}$, $i = 1,\dots$ we performed $10$ experiments for each  
parameter value. The regularization parameter $\alpha$ was chosen optimally by trial and 
error for each experiment. In the plots the mean errors are plotted against 
$\eta$. Within the error tolerances the experimental rates of convergence 
agree well with the rates of convergence predicted by our analysis.

\begin{figure}[!htb]
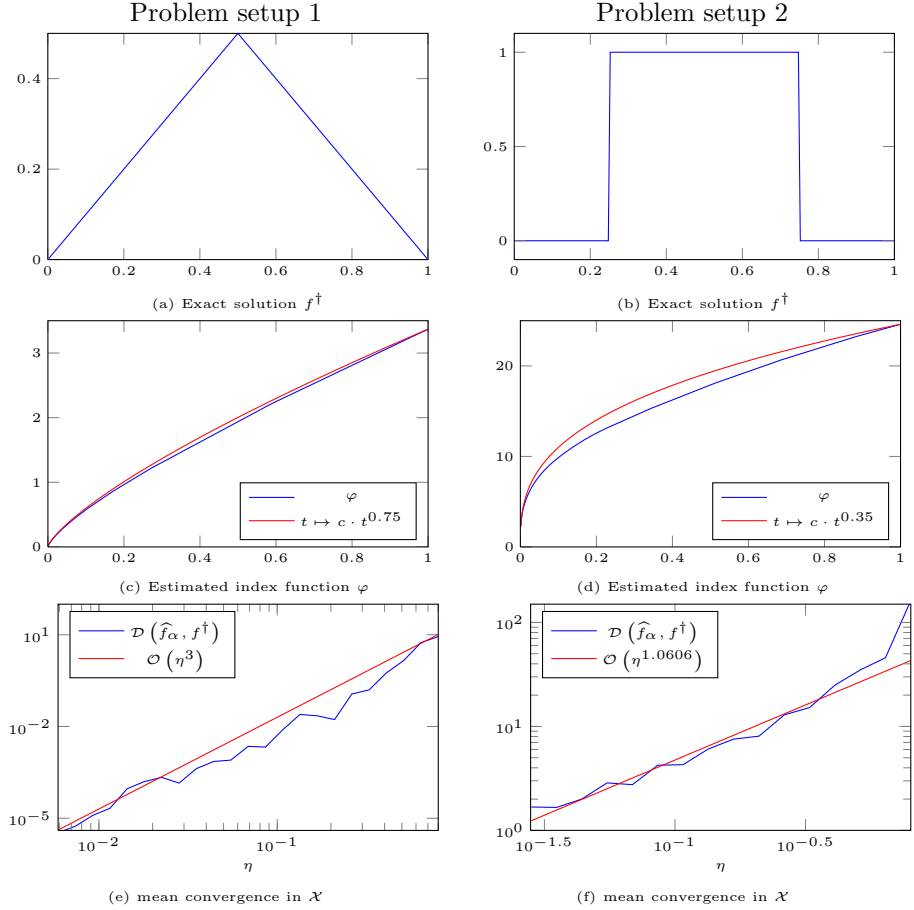

\setlength\fheight{3cm} \setlength\fwidth{5cm} 
\centering
\tiny
\begin{tabular}{rr}
\normalsize Problem setup 1 \rule{1.5cm}{0pt} & \normalsize Problem setup 2 \rule{1.5cm}{0pt} \\[0.1cm]
%
%
\begin{tikzpicture}

\begin{axis}[%
scale only axis,
width=\fwidth,
height=\fheight,
xmin=0, xmax=1,
ymin=0, ymax=0.5,
axis on top]
\addplot [
color=blue,
solid
]
coordinates{
 (0.00248756,0.00248756)(0.00746269,0.00746269)(0.0124378,0.0124378)(0.0174129,0.0174129)(0.0223881,0.0223881)(0.0273632,0.0273632)(0.0323383,0.0323383)(0.0373134,0.0373134)(0.0422886,0.0422886)(0.0472637,0.0472637)(0.0522388,0.0522388)(0.0572139,0.0572139)(0.0621891,0.0621891)(0.0671642,0.0671642)(0.0721393,0.0721393)(0.0771144,0.0771144)(0.0820896,0.0820896)(0.0870647,0.0870647)(0.0920398,0.0920398)(0.0970149,0.0970149)(0.10199,0.10199)(0.106965,0.106965)(0.11194,0.11194)(0.116915,0.116915)(0.121891,0.121891)(0.126866,0.126866)(0.131841,0.131841)(0.136816,0.136816)(0.141791,0.141791)(0.146766,0.146766)(0.151741,0.151741)(0.156716,0.156716)(0.161692,0.161692)(0.166667,0.166667)(0.171642,0.171642)(0.176617,0.176617)(0.181592,0.181592)(0.186567,0.186567)(0.191542,0.191542)(0.196517,0.196517)(0.201493,0.201493)(0.206468,0.206468)(0.211443,0.211443)(0.216418,0.216418)(0.221393,0.221393)(0.226368,0.226368)(0.231343,0.231343)(0.236318,0.236318)(0.241294,0.241294)(0.246269,0.246269)(0.251244,0.251244)(0.256219,0.
256219)(0.261194,0.261194)(0.266169,0.266169)(0.271144,0.271144)(0.276119,0.276119)(0.281095,0.281095)(0.28607,0.28607)(0.291045,0.291045)(0.29602,0.29602)(0.300995,0.300995)(0.30597,0.30597)(0.310945,0.310945)(0.31592,0.31592)(0.320896,0.320896)(0.325871,0.325871)(0.330846,0.330846)(0.335821,0.335821)(0.340796,0.340796)(0.345771,0.345771)(0.350746,0.350746)(0.355721,0.355721)(0.360697,0.360697)(0.365672,0.365672)(0.370647,0.370647)(0.375622,0.375622)(0.380597,0.380597)(0.385572,0.385572)(0.390547,0.390547)(0.395522,0.395522)(0.400498,0.400498)(0.405473,0.405473)(0.410448,0.410448)(0.415423,0.415423)(0.420398,0.420398)(0.425373,0.425373)(0.430348,0.430348)(0.435323,0.435323)(0.440299,0.440299)(0.445274,0.445274)(0.450249,0.450249)(0.455224,0.455224)(0.460199,0.460199)(0.465174,0.465174)(0.470149,0.470149)(0.475124,0.475124)(0.4801,0.4801)(0.485075,0.485075)(0.49005,0.49005)(0.495025,0.495025)(0.5,0.5)(0.504975,0.495025)(0.50995,0.49005)(0.514925,0.485075)(0.5199,0.4801)(0.524876,0.475124)(0.529851,0.470149)(
0.534826,0.465174)(0.539801,0.460199)(0.544776,0.455224)(0.549751,0.450249)(0.554726,0.445274)(0.559701,0.440299)(0.564677,0.435323)(0.569652,0.430348)(0.574627,0.425373)(0.579602,0.420398)(0.584577,0.415423)(0.589552,0.410448)(0.594527,0.405473)(0.599502,0.400498)(0.604478,0.395522)(0.609453,0.390547)(0.614428,0.385572)(0.619403,0.380597)(0.624378,0.375622)(0.629353,0.370647)(0.634328,0.365672)(0.639303,0.360697)(0.644279,0.355721)(0.649254,0.350746)(0.654229,0.345771)(0.659204,0.340796)(0.664179,0.335821)(0.669154,0.330846)(0.674129,0.325871)(0.679104,0.320896)(0.68408,0.31592)(0.689055,0.310945)(0.69403,0.30597)(0.699005,0.300995)(0.70398,0.29602)(0.708955,0.291045)(0.71393,0.28607)(0.718905,0.281095)(0.723881,0.276119)(0.728856,0.271144)(0.733831,0.266169)(0.738806,0.261194)(0.743781,0.256219)(0.748756,0.251244)(0.753731,0.246269)(0.758706,0.241294)(0.763682,0.236318)(0.768657,0.231343)(0.773632,0.226368)(0.778607,0.221393)(0.783582,0.216418)(0.788557,0.211443)(0.793532,0.206468)(0.798507,0.201493)(0.
803483,0.196517)(0.808458,0.191542)(0.813433,0.186567)(0.818408,0.181592)(0.823383,0.176617)(0.828358,0.171642)(0.833333,0.166667)(0.838308,0.161692)(0.843284,0.156716)(0.848259,0.151741)(0.853234,0.146766)(0.858209,0.141791)(0.863184,0.136816)(0.868159,0.131841)(0.873134,0.126866)(0.878109,0.121891)(0.883085,0.116915)(0.88806,0.11194)(0.893035,0.106965)(0.89801,0.10199)(0.902985,0.0970149)(0.90796,0.0920398)(0.912935,0.0870647)(0.91791,0.0820896)(0.922886,0.0771144)(0.927861,0.0721393)(0.932836,0.0671642)(0.937811,0.0621891)(0.942786,0.0572139)(0.947761,0.0522388)(0.952736,0.0472637)(0.957711,0.0422886)(0.962687,0.0373134)(0.967662,0.0323383)(0.972637,0.0273632)(0.977612,0.0223881)(0.982587,0.0174129)(0.987562,0.0124378)(0.992537,0.00746269)(0.997512,0.00248756) 
};

\end{axis}
\end{tikzpicture} & 
%
%
\begin{tikzpicture}

\begin{axis}[%
scale only axis,
width=\fwidth,
height=\fheight,
xmin=0, xmax=1,
ymin=-.1, ymax=1.1,
axis on top]
\addplot [
color=blue,
solid,
forget plot
]
coordinates{
(0,0)(0.2475,0)(0.2525,1)(0.7475,1)(0.7525,0)(1,0)
};

\end{axis}
\end{tikzpicture} \\[0.1cm]
(a)~Exact solution $\udag$ \rule{1.5cm}{0pt} & (b)~Exact solution $\udag$ \rule{1.5cm}{0pt} \\[0.1cm]
\input{problem_1_varphi.tikz} & \input{problem_2_varphi.tikz}\\[0.1cm]
(c)~Estimated index function $\varphi$\rule{1cm}{0pt} & (d)~Estimated index function $\varphi$ \rule{1cm}{0pt}\\[0.1cm]
%
%
\begin{tikzpicture}

\definecolor{mycolor1}{rgb}{0,0.5,0}
\definecolor{mycolor2}{rgb}{0,0.75,0.75}

\begin{loglogaxis}[%
scale only axis,
width=\fwidth,
height=\fheight,
xlabel = $\eta$,
xmin=0.0059, xmax=0.8,
ymin=4e-06, ymax=100,
axis on top,
legend entries={noise $=\xi$, noise $=\abs{\xi}\xi$, $\mathcal O \left(\eta^{3}\right)$},
legend entries={$\breg{\ualdel}$, $\mathcal O \left(\eta^{3}\right)$},
legend pos = north west
]
\addplot [
color=blue,
solid
]
coordinates{
 (0.8,8.68484)(0.64,5.52476)(0.512,1.43617)(0.4096,0.567907)(0.32768,0.158085)(0.262144,0.115257)(0.209715,0.0168724)(0.167772,0.0223457)(0.134218,0.0248146)(0.107374,0.0077959)(0.0858993,0.00211662)(0.0687195,0.00224623)(0.0549756,0.000798422)(0.0439805,0.000716815)(0.0351844,0.000418735)(0.0281475,0.000140919)(0.022518,0.000216909)(0.0180144,0.000158991)(0.0144115,9.25096e-05)(0.0115292,2.1147e-05)(0.00922337,1.23594e-05)(0.0073787,5.50376e-06)(0.00590296,3.33861e-06) 
};


\addplot [
color=red,
solid
]
coordinates{
 (0.8,10.2838)(0.64,5.2653)(0.512,2.69584)(0.4096,1.38027)(0.32768,0.706697)(0.262144,0.361829)(0.209715,0.185256)(0.167772,0.0948513)(0.134218,0.0485638)(0.107374,0.0248647)(0.0858993,0.0127307)(0.0687195,0.00651813)(0.0549756,0.00333728)(0.0439805,0.00170869)(0.0351844,0.000874849)(0.0281475,0.000447922)(0.022518,0.000229336)(0.0180144,0.00011742)(0.0144115,6.01191e-05)(0.0115292,3.0781e-05)(0.00922337,1.57599e-05)(0.0073787,8.06905e-06)(0.00590296,4.13136e-06) 
};
\end{loglogaxis}
\end{tikzpicture} & 
%
%
\begin{tikzpicture}

\definecolor{mycolor1}{rgb}{0,0.5,0}
\definecolor{mycolor2}{rgb}{0,0.75,0.75}

\begin{loglogaxis}[%
scale only axis,
width=\fwidth,
height=\fheight,
xlabel = $\eta$,
xmin=0.028, xmax=0.8,
ymin=1, ymax=150,
axis on top,
legend entries={$\breg{\ualdel}$, $\mathcal O \left(\eta^{1.0606}\right)$},
legend pos = north west
]
\addplot [
color=blue,
solid
]
coordinates{
 (0.8,162.363)(0.64,45.762)(0.512,34.9147)(0.4096,24.9176)(0.32768,15.1829)(0.262144,12.9139)(0.209715,8.05828)(0.167772,7.55617)(0.134218,6.06841)(0.107374,4.28584)(0.0858993,4.23294)(0.0687195,2.75237)(0.0549756,2.86458)(0.0439805,2.00301)(0.0351844,1.66291)(0.0281475,1.67943) 
};


\addplot [
color=red,
solid
]
coordinates{
 (0.8,43.0918)(0.64,34.0104)(0.512,26.8428)(0.4096,21.1858)(0.32768,16.721)(0.262144,13.1971)(0.209715,10.4158)(0.167772,8.22074)(0.134218,6.48825)(0.107374,5.12088)(0.0858993,4.04167)(0.0687195,3.18991)(0.0549756,2.51764)(0.0439805,1.98706)(0.0351844,1.56829)(0.0281475,1.23778) 
};
\end{loglogaxis}
\end{tikzpicture}\\[0.1cm]
(e)~mean convergence in $\X$\rule{1.6cm}{0pt} & (f)~mean convergence in $\X$\rule{1.6cm}{0pt}
\end{tabular}
\caption{Verification of the error estimates for the operator \eqref{eq:int_op} using the functions $\udag$ displayed in 
panels (a) and (b). Panels (c) and (d) show a numerical computation of the index functions $\varphi$ corresponding to 
these solutions (see text) and best fits of these functions of the form $\varphi(t)\approx ct^\kappa$. In Panels 
(e) and (f) we plot the error bounds from Theorem~\ref{thm:impulsive_noise_conv_2} against the experimental errors. 
(Here the multiplicative constant, which is not explicit in Theorem~\ref{thm:impulsive_noise_conv_2} was fitted to 
the experimental data.)
}
\label{fig:conv_rates}
\end{figure}

\section{Conclusions}
We have developed an error analysis of generalized Tikhonov regularization 
with $L^1$ fidelity term applied to inverse problems with impulsive 
noise. Our analysis is based on the deterministic, continuous noise 
model \eqref{eq:noise_model_1}. Numerical experiments suggest that 
the new error bounds are sharp (or at least almost sharp) up to constants 
whereas previous error bounds are far too pessimistic. 

Our analysis raises several questions for future research including 
lower bounds on the rate of convergence (which to our knowledge is 
an open question in all of the recent regularization theory in Banach spaces) 
and extensions of the results 
to stochastic noise models, more general data fidelity terms, and 
infinitely smoothing operators. 

\section*{Acknowledgement}

Financial support by the German Research Foundation DFG through 
SFB 755 and the Research Training Group 1023 is gratefully 
acknowledged. 

\small
\bibliography{impulsive_noise}{}

\begin{thebibliography}{10}

\bibitem{afhl12}
A.~Aravkin, M.~P. Friedlander, F.~J. Herrmann, and T.~van Leeuwen.
\newblock Robust inversion, dimensionality reduction, and randomized sampling.
\newblock {\em Math. Program.}, 134(1, Ser. B):101--125, 2012.

\bibitem{bl91}
J.~M. Borwein and A.~S. Lewis.
\newblock Convergence of best entropy estimates.
\newblock {\em SIAM J. Optimization}, 1:191--205, 1991.

\bibitem{bh10}
R.~I. Bot and B.~Hofmann.
\newblock An extension of the variational inequality approach for nonlinear
  ill-posed problems.
\newblock {\em J. Integral Equations Appl.}, 22(3):369--392, 2010.

\bibitem{bo04}
M.~Burger and S.~Osher.
\newblock Convergence rates of convex variational regularization.
\newblock {\em Inverse Probl.}, 20(5):1411--1422, 2004.

\bibitem{chn04}
R.~H. Chan, C.~Hu, and M.~Nikolova.
\newblock An iterative procedure for removing random-valued impulse noise.
\newblock {\em IEEE Signal Proc. Let.}, 11(12):921--924, 2004.

\bibitem{cj12}
C.~Clason and B.~Jin.
\newblock A semismooth {N}ewton method for nonlinear parameter identification
  problems with impulsive noise.
\newblock {\em SIAM J. Imaging Sci.}, 5(2):505--536, 2012.

\bibitem{cjk10b}
C.~Clason, B.~Jin, and K.~Kunisch.
\newblock A semismooth {N}ewton method for {$L^1$} data fitting with automatic
  choice of regularization parameters and noise calibration.
\newblock {\em SIAM J. Imaging Sci.}, 3(2):199--231, 2010.

\bibitem{e93}
P.~P.~B. Eggermont.
\newblock Maximum entropy regularization for {F}redholm integral equations of
  the first kind.
\newblock {\em SIAM J. Math. Anal.}, 24(6):1557--1576, 1993.

\bibitem{et76}
I.~Ekeland and R.~T{\'e}mam.
\newblock {\em Convex analysis and variational problems}.
\newblock Studies in mathematics and its applications. North Holland, 1976.

\bibitem{ehn96}
H.~Engl, M.~Hanke, and A.~Neubauer.
\newblock {\em Regularization of Inverse Problems}.
\newblock Springer, 1996.

\bibitem{f12}
J.~Flemming.
\newblock {\em Generalized {T}ikhonov regularization and modern convergence
  rate theory in {B}anach spaces}.
\newblock Shaker Verlag, Aachen, 2012.

\bibitem{fh10}
J.~Flemming and B.~Hofmann.
\newblock A new approach to source conditions in regularization with general
  residual term.
\newblock {\em Numer. Funct. Anal. Optimiz.}, 31:254--284, 2010.

\bibitem{fh11}
J.~Flemming and B.~Hofmann.
\newblock Convergence rates in constrained {T}ikhonov regularization:
  equivalence of projected source conditions and variational inequalities.
\newblock {\em Inverse Probl.}, 27(8):085001, 2011.

\bibitem{fhm11}
J.~Flemming, B.~Hofmann, and P.~Mathé.
\newblock Sharp converse results for the regularization error using distance
  functions.
\newblock {\em Inverse Probl.}, 27(2):025006, 2011.

\bibitem{g10b}
M.~Grasmair.
\newblock Generalized {B}regman distances and convergence rates for non-convex
  regularization methods.
\newblock {\em Inverse Probl.}, 26(11):115014, 2010.

\bibitem{hm12}
B.~Hofmann and P.~Mathé.
\newblock Parameter choice in {B}anach space regularization under variational
  inequalities.
\newblock {\em Inverse Problems}, 28(10):104006, 2012.

\bibitem{hy10}
B.~Hofmann and M.~Yamamoto.
\newblock On the interplay of source conditions and variational inequalities
  for nonlinear ill-posed problems.
\newblock {\em Appl. Anal.}, 89(11):1705--1727, 2010.

\bibitem{hw13}
T.~Hohage and F.~Werner.
\newblock Iteratively regularized {N}ewton-type methods for general data misfit
  functionals and applications to {P}oisson data.
\newblock {\em Numer. Math.}, 123(4):745--779, 2013.

\bibitem{j11}
B.~Jin.
\newblock A variational {B}ayesian method to inverse problems with impulsive
  noise.
\newblock {\em J. Comput. Phys.}, 231(2):423--435, 2012.

\bibitem{js12}
Q.~Jin and L.~Stals.
\newblock Nonstationary iterated {T}ikhonov regularization for ill-posed
  problems in {B}anach spaces.
\newblock {\em Inverse Probl.}, 28(10):104011, 2012.

\bibitem{kkm05}
T.~K{\"a}rkk{\"a}inen, K.~Kunisch, and K.~Majava.
\newblock Denoising of smooth images using {$L^1$}-fitting.
\newblock {\em Computing}, 74(4):353--376, 2005.

\bibitem{lsds11}
Y.-R. Li, L.~Shen, D.-Q. Dai, and B.~W. Suter.
\newblock Framelet algorithms for de-blurring images corrupted by impulse plus
  {G}aussian noise.
\newblock {\em IEEE Trans. Image Process.}, 20(7):1822--1837, 2011.

\bibitem{n02}
M.~Nikolova.
\newblock Minimizers of cost-functions involving nonsmooth data-fidelity terms.
  {A}pplication to the processing of outliers.
\newblock {\em SIAM J. Numer. Anal.}, 40(3):965--994 (electronic), 2002.

\bibitem{n04}
M.~Nikolova.
\newblock A variational approach to remove outliers and impulse noise.
\newblock {\em J. Math. Imaging Vision}, 20(1-2):99--120, 2004.
\newblock Special issue on mathematics and image analysis.

\bibitem{RR:92}
M.~Renardy and R.~C. Rogers.
\newblock {\em An Introduction to Partial Differential Equations}.
\newblock Springer, 1992.

\bibitem{r05}
E.~Resmerita.
\newblock Regularization of ill-posed problems in {B}anach spaces: convergence
  rates.
\newblock {\em Inverse Probl.}, 21(4):1303--1314, 2005.

\bibitem{s08}
O.~Scherzer, M.~Grasmair, H.~Grossauer, M.~Haltmeier, and F.~Lenzen.
\newblock {\em Variational Methods in Imaging}.
\newblock Applied Mathematical Sciences. Springer, 2008.

\bibitem{ts10}
P.~Tor\'{i}o and M.~G. S\'{a}nchez.
\newblock Generating impulsive noise.
\newblock {\em IEEE Antennas Propag.}, 52(4):168--173, August 2010.

\bibitem{wh12}
F.~Werner and T.~Hohage.
\newblock Convergence rates in expectation for {T}ikhonov-type regularization
  of {I}nverse {P}roblems with {P}oisson data.
\newblock {\em Inverse Probl.}, 28(10):104004, 2012.

\bibitem{w87}
J.~Wloka.
\newblock {\em Partial differential equations}.
\newblock Cambridge University Press, 1987.

\bibitem{xr91}
Z.~B. Xu and G.~F. Roach.
\newblock Characteristic inequalities of uniformly convex and uniformly smooth
  {B}anach spaces.
\newblock {\em J. Math. Anal. Appl.}, 157(1):189--210, 1991.

\bibitem{yzy09}
J.~Yang, Y.~Zhang, and W.~Yin.
\newblock An efficient {TVL}1 algorithm for deblurring multichannel images
  corrupted by impulsive noise.
\newblock {\em SIAM J. Sci. Comput.}, 31(4):2842--2865, 2009.

\bibitem{ygo07}
W.~Yin, D.~Goldfarb, and S.~Osher.
\newblock The total variation regularized {$L^1$} model for multiscale
  decomposition.
\newblock {\em Multiscale Model. Simul.}, 6(1):190--211 (electronic), 2007.

\end{thebibliography}
\bibliographystyle{plain}
\end{document}